\newcommand\numberthis{\addtocounter{equation}{1}\tag{\theequation}}
\newcommand{\beq}{\begin{equation}}
\newcommand{\enq}{\end{equation}}
\def\theequation{\@arabic\c@equation}
\newcommand{\bbR}{{\mathbb{R}}}
\newcommand{\bbZ}{{\mathbb{Z}}}
\newcommand{\bbC}{{\mathbb{C}}}
\newcommand{\bbT}{{\mathbb{T}}}
\newcommand{\cB}{{\mathcal B}}
\newcommand{\lb}{\label}
\newcommand{\bu}{{\mathbf u}}
\newcommand{\bv}{{\mathbf v}}
\newcommand{\bk}{{\mathbf k}}
\newcommand{\bH}{{\mathbf H}}
\newcommand{\spec}{\sigma}
\newcommand{\bi}{\bibitem}
\numberwithin{equation}{section}
\renewcommand{\div}{\operatorname{div}}
\renewcommand{\det}{\operatorname{det}}
\newcommand{\dom}{\operatorname{dom}}
\newcommand{\nt}{{\nabla^\perp}}
\DeclareMathOperator{\Tr}{Tr}
\newcommand{\curl}{\operatorname{curl}}
\renewcommand{\Re}{\operatorname{Re }}
\renewcommand{\ker}{\operatorname{ker}}
\newcommand{\diag}{\operatorname{diag}}
\theoremstyle{plain}
\newtheorem{theorem}{Theorem}[section]
\newtheorem{hypothesis}[theorem]{Hypothesis}
\newtheorem{lemma}[theorem]{Lemma}
\newtheorem{proposition}[theorem]{Proposition}
\theoremstyle{definition}
\newtheorem{definition}[theorem]{Definition}
\newtheorem{example}[theorem]{Example}
\newtheorem{remark}[theorem]{Remark}
\begin{document}
\allowdisplaybreaks

\title[2D Euler eigenvalues via Birman-Schwinger and Lin's operators]{Eigenvalues of the linearized 2D Euler equations via Birman-Schwinger and Lin's operators}
\thanks{Partially supported by  NSF grant DMS-171098, Research Council of the University of Missouri and the Simons Foundation.}
\author[Y. Latushkin]{Yuri Latushkin}
\address{Department of Mathematics,
University of Missouri, Columbia, MO 65211, USA}
\email{latushkiny@missouri.edu}
\urladdr{http://www.math.missouri.edu/personnel/faculty/latushkiny.html}
\author[S. Vasudevan]{Shibi Vasudevan}
\address{International Centre for Theoretical Sciences,
Tata Institute of Fundamental Research, Bengaluru, 560089, India}
\email{shibi.vasudevan@icts.res.in}
\date{\today}

\keywords{2D Euler equations, instability, Birman-Schwinger operators, 2-modified perturbation determinants}

\begin{abstract}
We study spectral instability of steady states to the linearized 2D Euler equations on the torus written in vorticity form via certain Birman-Schwinger type operators $K_{\lambda}(\mu)$ and their associated 2-modified perturbation determinants $\mathcal D(\lambda,\mu)$. Our main result characterizes the existence of an unstable eigenvalue to the linearized vorticity operator $L_{\rm vor}$ in terms of zeros of the 2-modified Fredholm determinant $\mathcal D(\lambda,0)=\det_{2}(I-K_{\lambda}(0))$ associated with the Hilbert Schmidt operator $K_{\lambda}(\mu)$ for $\mu=0$. As a consequence, we are also able to provide an alternative proof to an instability theorem first proved by Zhiwu Lin which relates existence of an unstable eigenvalue for $L_{\rm vor}$ to the number of negative eigenvalues of a limiting elliptic dispersion operator $A_{0}$.
\end{abstract}

\maketitle
\normalsize

\section{Introduction} \lb{s1}
The problem of finding unstable eigenvalues for the differential operator obtained by linearizing the two dimensional Euler equations of ideal fluid dynamics about a steady state is classical \cite{DZ04,F71,FS01, FS05, FY99, S00}. Besides the already mentioned general sources, we cite \cite{BFY99,BN10,GC14, Fr, FH98, FSV97, FVY00, ZL03, ZL04} and emphasize that our list is drastically incomplete. The main objective of the current note is to involve perturbation determinants into the study of spectral problems for the linearized Euler equations. The original inspiration for this paper comes from the work by Zhiwu Lin \cite{ZL01,ZL03,ZL04}. We believe that we streamlined and somewhat clarified his approach. Another important predecessor of this paper is \cite{GC14} where a direct integral decomposition of the linearized Euler operator was obtained. 

We realize that in the current paper we obtain a rather theoretical result as we do not have a striking new example of instability proved via perturbation determinants (see, however, Section 5 for an example). Our main achievement is a characterization of isolated eigenvalues of the linearized operator as zeros of certain analytic function of the spectral parameter, the two-modified Fredholm determinant, which in turn leads to a new proof of a theorem by Zhiwu Lin \cite{ZL04}. We do not know if the perturbation determinants were previously used for the linearized Euler operators, and we supply herein this seemingly missing tool.

We consider the two dimensional Euler equations in vorticity form
\begin{equation*}
\omega_t+\bu\cdot\nabla\omega=0
\end{equation*}
on the two torus $\bbT^{2}$, where $\omega=\curl \bu$. Let  $\bu^{0}\cdot \nabla \omega^{0}=0$ be a steady state solution where $\omega^{0}=-\Delta \psi^{0}$ and $\omega^{0}=g(\psi^{0})$ for a sufficiently smooth function $g:\bbR \to \bbR$. Here, $\psi^{0}$ is the stream function associated with the steady state $\omega^{0}$ and $\bu^{0}$, that is, $\bu^{0}=(\psi^{0}_{y},\psi^{0}_{x})$. Linearizing about this steady state, we obtain the equation
\[\omega_t+\bu^0\cdot\nabla\omega+\curl^{-1}\omega\cdot\nabla\omega^0=0.\] 
We define the linearized vorticity operator $L_{\rm vor}$ by
\[ L_{\rm vor}\omega=-\bu^0\cdot\nabla\omega-\curl^{-1}\omega\cdot\nabla\omega^0.\] In this paper, we study the discrete spectrum of the operator $L_{\rm vor}$.
Our main tools are Birman-Schwinger type operators, $K_{\lambda}(\mu)$ (discussed in more detail in Section 4 and defined in equation \eqref{klmudef}), and Lin's operators, $A_{\lambda}$. The Lin's dispersion operators were introduced and studied by Zhiwu Lin in \cite{ZL01,ZL03,ZL04} and defined by the formulae 
\begin{align*}
&A_\lambda =-\Delta-g'(\psi^0(x,y))+g'(\psi^0(x,y))\lambda(\lambda-L^0)^{-1}, \quad \lambda > 0, \\
&A_0=-\Delta-g'(\psi^0(x,y))+g'(\psi^0(x,y))P_0, \quad \lambda =0,
\end{align*}
where $\psi^{0}$ is the stream function for the steady state, $g$ is the real function relating the vorticity and the stream function via $\omega^{0}=g(\psi^{0})$, $L^{0}$ is the operator of differentiation along streamlines given by the formula $L^{0}f=-\bu^{0}\cdot \nabla f$, and $P_{0}$ is the orthogonal projection onto the kernel of $L^{0}$.

A remarkable property of the dispersion operators discovered by Z.\ Lin is that $\lambda > 0$ is an eigenvalue of the operator $L_{\rm vor}$ if and only if $0$ is an eigenvalue of $A_{\lambda}$; cf. Proposition \ref{LvorAlambda}. With this fact in mind, we introduce a family of Birman-Schwinger operators, $K_{\lambda}(\mu)$, which belong to the ideal $\mathcal B_{2}$ of Hibert-Schmidt operators and satisfy the identity $A_{\lambda}-\mu=(I-K_{\lambda}(\mu))(-\Delta-\mu))$, and define the 2-modified Fredholm determinants $\mathcal D(\lambda,\mu)=\det_{2}(I-K_{\lambda}(\mu))$, see formulas \eqref{klmudef} and \eqref{new455} below. As a result, in this paper, we describe the unstable eigenvalues of $L_{\rm vor}$ as zeros of the analytic function $\mathcal D(\cdot,0)$ for $\mu=0$, see Theorem \ref{newequivalence}, which is the main new result of this paper. In addition, we give a new version of the proof of an important theorem by Z.\ Lin saying that if $A_{0}$ has no kernel and an odd number of negative eigenvalues, then the operator $L_{\rm vor}$ has at least one positive eigenvalue, see Theorem \ref{LinThm} below. The proof is based on the fact that $A_{\lambda}-A_{0}$ converges to zero strongly in $L^{2}(\mathbb T^{2})$ and, as a result, that $K_{\lambda}(\mu)-K_{0}(\mu)$ converges to zero in $\mathcal B_{2}$ as $\lambda \to 0^{+}$. Due to the convergence of the respective perturbation determinants, it follows that the number of negative eigenvalues of $A_{0}$ and $A_{\lambda}$ for small $\lambda > 0$ (which is equal to the number of zeros of $\mathcal D(0,\cdot)$ and $\mathcal D(\lambda,\cdot)$, respectively) coincide, see Proposition \ref{A0Alambd} below. Since $A_{\lambda}$ has no negative eigenvalues for large $\lambda >0$, and has an even number of nonreal eigenvalues, this shows that when $\lambda$ changes from small to large positive values, an eigenvalue of $A_{\lambda}$ must cross through zero, thus proving the existence of a positive eigenvalue of $L_{\rm vor}$.

\section{Problem setup and preliminaries} %
Consider the two dimensional inviscid Euler equations
\begin{equation}\lb{4EE}
\bu_t+\big(\bu\cdot\nabla\big)\bu+\nabla p=0,\quad \div\bu=0,
\end{equation}
on the torus $\bbT^2=\bbR^2/2\pi\bbZ^2$. Since $\div \bu=0$, there is a stream function $\psi$ such that equation $\bu=-\nt\psi$ holds, where $-\nt$ is the vector $(-\partial_{y},\partial_{x})$ so that $-\nt \psi =(\psi_{y},\psi_{x})$. We introduce the vorticity $\omega=\curl\bu$ so that $\omega=-\Delta\psi$.
 
Applying $\curl$ in \eqref{4EE}, one obtains the Euler equation in vorticity form,
\begin{equation}\lb{4EEV}
\omega_t+\bu\cdot\nabla\omega=0.
\end{equation}
The Euler equation for the stream function $\psi$ is
\begin{equation}\lb{4EES}
\Delta\psi_t-\nt\psi\cdot\nabla(\Delta\psi)=\Delta\psi_t-\psi_x\Delta\psi_y+\psi_y\Delta\psi_x=0.
\end{equation}

Let us consider a smooth steady state solution $\omega^0=\curl\bu^0=-\Delta\psi^0$ of \eqref{4EEV}. In particular,
$\nt\psi^0\cdot\nabla\omega^0=\big(-\psi^0_y\partial_x+\psi^0_x\partial_y\big)\omega^0=0$, and thus $\nabla\psi^0$ and $\nabla(\Delta\psi^0)$ are parallel. Assume furthermore
\begin{hypothesis}\label{4hyp}
There exists a smooth function $g:\mathbb R \to \mathbb R$, such that the equation
\begin{equation}\lb{ompsi0}
\omega^0(x,y)=-\Delta\psi^0(x,y)=g(\psi^0(x,y))
\end{equation}
holds for all $(x,y) \in \mathbb T^{2}$. 
\end{hypothesis}
Hypothesis \ref{4hyp} in turn, implies that 
\begin{equation}\lb{ompsi1}
\nt\omega^0=g'(\psi^0)\nt\psi^0.
\end{equation}
We linearize the Euler equations \eqref{4EE}--\eqref{4EES} about the steady state:
\begin{align}
\bu_t&+\bu^0\cdot\nabla\bu+\bu\cdot\nabla\bu^0+\nabla p=0,\quad \div\bu=0, \lb{4LEE}\\
\omega_t&+\bu^0\cdot\nabla\omega+\curl^{-1}\omega\cdot\nabla\omega^0=0, \lb{4LEEV}\\
\Delta\psi_t&-\psi^0_x\Delta\psi_y+\psi^0_y\Delta\psi_x-\psi_x\Delta\psi^0_y+\psi_y\Delta\psi^0_x=0.\lb{4LEES}
\end{align}
Here, $\bu=\curl^{-1}\omega$ denotes the unique solution of the system
$\curl\bu=\omega$, $\div\bu=0$ with $\omega$ having zero space average $\int_{\bbT^{2}}\omega\,dx\,dy=0$.

We introduce the respective linear operators $L_{\rm vel}$, $L_{\rm vor}$, $L_{\rm str}$ corresponding to \eqref{4LEE}--\eqref{4LEES}, on the following Sobolev spaces. %
We fix an $m\in\bbZ$, and denote by $H_a^m$ the Sobolev space of (scalar $W^m_2(\bbT^2)$) functions or distributions with zero space average:
\begin{equation}
H_a^m=\Big\{w(x,y)=\sum_{\bk\in\bbZ^2}w_\bk e^{i\bk\cdot(x,y)}\,\Big|\,
w_{\bf 0}=0, \sum_{\bk\in\bbZ^2}(1+|\bk|^{2})^{m}|w_\bk|^2<\infty \Big\},
\end{equation}
and by ${\mathbf H}^m_s$ the Sobolev space of (solenoidal vector valued) functions with zero divergence and zero space average:
\begin{equation}
{\mathbf H}^m_s=\Big\{\bv(x,y)=\sum_{\bk\in\bbZ^2}\bv_\bk e^{i\bk\cdot(x,y)}\,\Big|\, \bv_0=0,
\div\bv_{\bk}=0, \sum_{\bk\in\bbZ^2}(1+|\bk|^{2})^{m}\|\bv_\bk\|^2<\infty \Big\}.
\end{equation}
Setting
\begin{align}
L_{\rm vel}\bu&=-\bu^0\cdot\nabla\bu-\bu\cdot\nabla\bu^0-\nabla p,\lb{Lvel}\\
L_{\rm vor}\omega&=-\bu^0\cdot\nabla\omega-\curl^{-1}\omega\cdot\nabla\omega^0,\lb{Lvor}\\
L_{\rm str}\psi&=-\Delta^{-1}\big(-\psi^0_x\,\Delta\psi_y+\psi^0_y\,\Delta\psi_x-\psi_x\,\Delta\psi^0_y+\psi_y\,\Delta\psi^0_x\big)\nonumber\\
&=\Delta^{-1}\big(-\bu^0\cdot\nabla(\Delta\psi)+\nt\psi\cdot\nabla(\Delta\psi^0)\big),\lb{Lstr}\\
&=\Delta^{-1}\big(-\bu^0\cdot(\Delta\psi)-\curl^{-1}(\Delta\psi)\cdot\nabla(-\Delta\psi^0)\big),\nonumber
\end{align}

we observe that the following diagrams commute:
\begin{equation}\lb{dia}\begin{CD}
H_a^{m+2}@>L_{\rm str}>> H_a^{m+1}\\
 @VV\nt V @ A (\nt)^{-1}AA\\
\bH^{m+1}_s@>L_{\rm vel}>> \bH^{m}_s\\
@VV\curl V @ A (\curl)^{-1}AA\\
H_a^{m}@>L_{\rm vor}>> H_a^{m-1}
\end{CD}, \qquad 
\begin{CD} H_a^{m+2}@>L_{\rm str}>> H_a^{m+1}\\
 @V\Delta VV @ AA (\Delta)^{-1}A\\
 H_a^{m}@>L_{\rm vor}>> H_a^{m-1}.
\end{CD}\end{equation}

\begin{remark}\label{steadystateassumptions}
We need a few further assumptions about the steady state. We follow the notation in \cite{GC14}. Denote by $\widehat{D}$, the union of the images of periodic orbits of the flow generated by $\bu^{0}$ and by $D_{0}=\{(x,y):\nabla^{\perp}\psi^{0}(x,y)=0\}$ the set of fixed points of the flow. We further assume that the periodic orbits together with the fixed points ``fill up'' the torus, i.e., more precisely, we have the following hypothesis.
\begin{hypothesis}\label{periodicorbitsassump}
In addition to Hypothesis \ref{4hyp} we assume that $\mathbb T^{2} \backslash (\widehat{D}\cup D_{0})$ has measure zero in $\mathbb T^{2}$.
\end{hypothesis}
For any $\rho$ in the image of $\psi^{0}$ which is not a critical value, the level sets $\{(x,y):\psi^{0}(x,y)=\rho\}$ consist of a finite number of disjoint closed curves which we denote by $\Gamma_{1}(\rho), \Gamma_{2}(\rho), \ldots, \Gamma_{n(\rho)}(\rho)$. Define the set $\mathcal J$ to be the disjoint union of the values assumed by the steady state $\psi^{0}$ on each periodic orbit, i.e., as in \cite{GC14}, we set
\begin{equation}\label{jdef}
\mathcal{J} :=  \amalg_{i} \psi^{0}(x,y)|_{\Gamma_{i}(\rho)}.
\end{equation}
This set assumes the values of the stream function on periodic orbits, counted with multiplicity. It is a disjoint union of open intervals. For example, if we consider the steady state $\bu^{0}=(\cos y, 0)$ on $\mathbb T^{2}$, then $\mathcal{J}=(-1,1) \amalg (-1,1)$. One can thus define the period function $T:\mathcal J \to (0,\infty)$ by letting $T(\rho)$ be the time period of the orbit corresponding to $\rho \in \mathcal J$. Henceforth, we denote by $\Gamma(\rho)$ the periodic orbit corresponding to $\rho \in \mathcal J$.  
\end{remark}

\section{Lin's dispersion operators and their properties}\label{Linsop}
In this section we define and study elliptic dispersion operators, $A_{\lambda}$, parametrized by the spectral parameter $\lambda \geq 0$ for the linearized Euler operator $L_{\rm vor}$ defined in \eqref{Lvor}. The operators $A_{\lambda}$ have the remarkable property that $\lambda >0$ is an isolated eigenvalue of the linearized Euler operator $L_{\rm vor}$ if and only if $0$ is in the spectrum of $A_{\lambda}$. The operators $A_{\lambda}$ are often called dispersion operators; they were introduced by Zhiwu Lin in \cite{ZL01,ZL03,ZL04} and studied in subsequent work, see, e.g. \cite{GGVR07, S08}. We call them Lin's operators. 

Setting
$m=1$ in the diagrams \eqref{dia}, we will discuss the spectrum of the operator $L_{\rm vor}$ in $L^{2}_{a}$. Thus, we will view the operator $L_{\rm vor}$ in \eqref{Lvor} as an unbounded first order differential operator on $L^{2}_{a}$ with the domain $H_a^{1}$, the Sobolev space of functions with zero space average.
If $\omega$ is an eigenfunction of $L_{\rm vor}$ corresponding to the eigenvalue $\lambda$ with $\Re(\lambda)>0$, then $\omega\in H_a^1$ and $L_{\rm vor}\omega=\lambda\omega$ show that the
linearized Euler equation \eqref{4LEEV} has the solution $e^{\lambda t}\omega$ whose $L^2(\bbT^{2})$-norm grows exponentially. Respectively, the enstrophy, that is, $H^2(\bbT^{2})$-norm of the corresponding stream function $e^{\lambda t}\psi$, such that $\omega=-\Delta\psi$,
grows exponentially.

Let us denote by $\varphi^t$ the flow on $\bbT^{2}$ generated by the vector field $\nt\psi^0=-\bu^0$. In other words, $\varphi^t(x,y)=(X(t;x,y),Y(t;x,y))$ where $X$ and $Y$ solve the Cauchy problem 
\begin{equation}
X_t=-\psi^0_y(X,Y),\, Y_t=\psi^0_x(X,Y),\, X(0)=x, \, Y(0)=y.
\end{equation}
The strongly continuous evolution group $\{T^t\}_{t\in\bbR}$ on $L^2(\bbT^{2})$ of unitary operators defined by $T^t\phi=\phi\circ\varphi^t$ is generated by the first order differential operator $L^{0}$ defined by
\begin{equation}
L^0\phi=\nt\psi^0\cdot\nabla\phi=\big(-\psi^0_y\partial_x+\psi^0_x\partial_y\big)\phi,
\end{equation} 
with the domain
$\dom L^0=H^1_a$. We have the following properties of  $L^{0}$.

\begin{proposition}
\begin{align}\lb{prop1}
&(i) \quad \big(L^0\big)^*=-L^0;\\
&(ii) \quad \spec(L^0)\subseteq i\bbR ;\lb{prop2}\\
&(iii) \quad \spec(L^0)= i\bbR\quad %
\text{ provided $\varphi^{t}$ has arbitrary long orbits};\lb{prop3n} \\
&(iv) \quad \|(\lambda-L^0)^{-1}\|_{\cB(L_2)} \leq |\Re(\lambda)|^{-1},\, \Re(\lambda)\neq0;\lb{prop3}\\
&(v) \quad \|(\lambda-L^0)^{-1}\|_{\cB(L_2)} = |\Re(\lambda)|^{-1},\, \Re(\lambda)\neq0;\nonumber \\ & 
\text{ provided $\varphi^{t}$ has arbitrary long orbits;}\lb{prop3a}\\
&(vi) \quad L^0 \text{ and } (\lambda-L^0)^{-1}\text{ are normal operators};\lb{prop4}\\
&(vii) \quad L^0 \text{ and the operator of multiplication by } g'(\psi^0(x,y)) \text{ commute. }\lb{prop5}
\end{align}
\end{proposition}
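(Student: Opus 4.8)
The plan is to get (i) from Stone's theorem, read off (ii), (iv), (vi) as soft consequences of skew\nobreakdash-adjointness, obtain (vii) from the fact that $g'(\psi^0)$ is a first integral of the flow, treat (iii) as the only real computation, and deduce (v) from (iii) together with normality.

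First I would record that the generating vector field is divergence free, $\div\nt\psi^0=-\psi^0_{yx}+\psi^0_{xy}=0$, so the flow $\varphi^t$ preserves Lebesgue measure on $\bbT^2$; hence each $T^t\phi=\phi\circ\varphi^t$ is an $L^2$\nobreakdash-isometry, $\{T^t\}_{t\in\bbR}$ is a strongly continuous unitary group, and it leaves $L^2_a$ invariant since $\int_{\bbT^2}\phi\circ\varphi^t=\int_{\bbT^2}\phi$. By Stone's theorem its generator is skew\nobreakdash-adjoint, and the standard identification of this generator with the transport operator on $\dom L^0=H^1_a$ gives (i); as a cross\nobreakdash-check, integration by parts together with $\div\nt\psi^0=0$ gives $\langle L^0\phi,\eta\rangle=-\langle\phi,L^0\eta\rangle$, i.e.\ skew\nobreakdash-symmetry. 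Then (ii) is immediate (the spectrum of a skew\nobreakdash-adjoint operator lies in $i\bbR$), and (vi) follows from $L^0(L^0)^*=-(L^0)^2=(L^0)^*L^0$ plus the fact that resolvents of normal operators are normal. For (iv) I would use either the spectral theorem for the self\nobreakdash-adjoint operator $\tfrac1i L^0$, giving $\|(\lambda-L^0)^{-1}\|_{\cB(L_2)}=\operatorname{dist}(\lambda,\spec L^0)^{-1}\le\operatorname{dist}(\lambda,i\bbR)^{-1}=|\re\lambda|^{-1}$, or equivalently the Laplace representation $(\lambda-L^0)^{-1}=\int_0^\infty e^{-\lambda t}T^t\,dt$ for $\re\lambda>0$ (and the analogous backward formula for $\re\lambda<0$) with $\|T^t\|=1$. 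For (vii): $\psi^0$ is constant along its own flow, hence so is $g'(\psi^0)$, equivalently $L^0\big(g'(\psi^0)\big)=g''(\psi^0)\,\nt\psi^0\cdot\nabla\psi^0=0$ because $\nt\psi^0\perp\nabla\psi^0$ pointwise; therefore the bounded multiplication operator $M_{g'(\psi^0)}$ commutes with every $T^t$, hence preserves $\dom L^0$ and commutes with $L^0$ there, hence commutes with $(\lambda-L^0)^{-1}$.

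The crux is (iii): under the hypothesis that $\varphi^t$ has arbitrarily long orbits, which in the notation of Remark~\ref{steadystateassumptions} I read as $\sup_{\rho\in\cJ}T(\rho)=+\infty$, I must show $i\bbR\subseteq\spec(L^0)$, and I would do this by constructing Weyl sequences. Given $\xi_0\in\bbR$, choose periodic orbits $\Gamma(\rho_n)$ with $T(\rho_n)\to\infty$ and integers $k_n$ with $2\pi k_n/T(\rho_n)\to\xi_0$. In a thin flow\nobreakdash-box neighborhood of $\Gamma(\rho_n)$ (of action--angle type), with transverse coordinate $\rho$ near $\rho_n$ and angle $\theta\in\bbR/2\pi\bbZ$ along nearby orbits, one has $L^0=\tfrac{2\pi}{T(\rho)}\partial_\theta$; taking $\phi_n(\rho,\theta)=\eta_n(\rho)e^{ik_n\theta}$ with $\eta_n$ an $L^2$\nobreakdash-normalized bump supported on a $\rho$\nobreakdash-interval so short that $\tfrac{2\pi k_n}{T(\rho)}$ varies by less than $1/n$ across it, one gets $(L^0-i\xi_0)\phi_n=i\big(\tfrac{2\pi k_n}{T(\rho)}-\xi_0\big)\phi_n$ with coefficient $o(1)$ uniformly on $\supp\phi_n$, so $\|(L^0-i\xi_0)\phi_n\|\to0$ while $\|\phi_n\|=1$; subtracting the (Riemann--Lebesgue--small) mean of $\phi_n$ places the sequence in $H^1_a=\dom L^0$ without affecting the estimates. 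Hence $i\xi_0\in\spec(L^0)$, and with (ii) this yields (iii). Statement (v) follows at once: by (iii), $\operatorname{dist}(\lambda,\spec L^0)=|\re\lambda|$, and since $(\lambda-L^0)^{-1}$ is normal its norm equals its spectral radius $\sup\{|\lambda-\zeta|^{-1}:\zeta\in\spec L^0\}=|\re\lambda|^{-1}$ (the lower bound is also directly visible from the Weyl sequences above).

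I expect the main obstacle to be the geometric bookkeeping in (iii): setting up genuine flow\nobreakdash-box coordinates along a long periodic orbit, controlling the variation of the period function across the box, and checking that the mean\nobreakdash-corrected bumps are bona fide elements of $\dom L^0$ satisfying the stated estimate; everything else is standard theory of skew\nobreakdash-adjoint generators of unitary groups. One minor technical caveat I would flag is that in (vii) the operator $M_{g'(\psi^0)}$ need not map $L^2_a$ into itself, so the commutation relation is to be read on $L^2(\bbT^2)$, where $L^0$ generates $\{T^t\}$ with domain $H^1(\bbT^2)$.
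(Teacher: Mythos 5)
Your proposal is correct, and for items (i), (ii), (iv), (v), (vi), (vii) it runs along essentially the same lines as the paper: skew-symmetry via integration by parts (equivalently, unitarity of $T^t$ from measure preservation), spectrum in $i\bbR$, resolvent bounds by passing to the self-adjoint operator $\tfrac1i L^0$, normality, and commutation with $g'(\psi^0)$ because $\psi^0$ is a first integral. The one place where you genuinely diverge is \eqref{prop3n}: the paper does not prove this at all but cites \cite[Theorem 5]{SL05} and \cite{CL99}, where $\spec(L^0)=i\bbR$ is established for a general area-preserving flow with arbitrarily long trajectories (periodic or not) by evolution-semigroup/weighted-shift techniques. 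Your Weyl-sequence construction in action--angle coordinates is self-contained and elementary, which is a real gain, but note that it uses more than the bare hypothesis of the proposition: it needs the periodic-orbit structure of Hypothesis \ref{periodicorbitsassump}, periods $T(\rho_n)\to\infty$ along a continuum of orbits, and continuity of the period function on $\cJ$, so it proves \eqref{prop3n} under the paper's standing assumptions rather than in the generality of the cited references. The bookkeeping you flag (shrinking the $\rho$-support so that $2\pi k_n/T(\rho)$ varies by $o(1)$ even though $k_n\to\infty$, and membership in $H^1_a$ --- in fact the mean of $\eta_n(\rho)e^{ik_n\theta}$ vanishes automatically for $k_n\neq0$) all goes through. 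Your caveat that $M_{g'(\psi^0)}$ need not preserve $L^2_a$ in \eqref{prop5} is a fair observation that the paper glosses over.
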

\begin{proof}
Property \eqref{prop1} follows by integrating by parts and implies \eqref{prop2}. Property \eqref{prop3n} is proved, for example, in \cite[Theorem 5]{SL05} or \cite{CL99}. Properties \eqref{prop3} and \eqref{prop3a} are proved by passing to the self adjoint operator $iL^{0}$. Property \eqref{prop4} is obvious. Property \eqref{prop5}, see \eqref{ompsi0}, follows from the fact that $\psi^0(\varphi^t(x,y))$ is $t$-independent due to $\nabla\psi^0\cdot\nt\psi^0=0$.
\end{proof}

We will now show that $\lambda\in\bbC\setminus i\bbR$ is an eigenvalue of the operator $L_{\rm vor}$ acting in $L_2(\bbT^{2})$ if and only if $0$ is an eigenvalue of a certain elliptic operator, $A_\lambda$ in $L_2(\bbT^{2})$, having a sufficiently smooth eigenfunction. 
\begin{definition}\label{dfnAl}
We define $A_\lambda$ with the domain $\dom(A_\lambda)=\dom\Delta=H^{2}_{a}$ for nonimaginary $\lambda$ as follows:
\begin{align}
A_\lambda&=-\Delta+g'(\psi^0(x,y))L^0(\lambda-L^0)^{-1}\lb{defAl1}\\
&=-\Delta-g'(\psi^0(x,y))+g'(\psi^0(x,y))\lambda(\lambda-L^0)^{-1},
\, \Re(\lambda)\neq0.\lb{defAl2}
\end{align}
\end{definition}
We remark that the operator $A_\lambda$ is not self-adjoint; it is a perturbation of a self-adjoint operator (either $-\Delta$, see \eqref{defAl1},
or $-\Delta-g'(\psi^0)$, see \eqref{defAl2}) by a normal, see \eqref{prop4}, \eqref{prop5}, relatively compact bounded operator (either $g'(\psi^0)L^0(\lambda-L^0)^{-1}$ or $\lambda g'(\psi^0)(\lambda-L^0)^{-1}$).

\begin{remark}\lb{comcoj}
Since $A_\lambda$ commutes with complex conjugation, the nonreal eigenvalues of $A_\lambda$ are complex conjugate. Thus, the number of the nonreal eigenvalues of $A_{\lambda}$ must be even for each value of $\lambda$.\end{remark}
The following calculation shows how the eigenfunctions of $L_{\rm vor}$ are related to the elements of the kernel of $A_{\lambda}$.
Using $\bu=\curl^{-1}\omega=-\nt\psi$ and \eqref{ompsi1}, let us re-write
\eqref{Lvor} as follows:
\begin{align}
L_{\rm vor}\omega&=\nt\psi^0\cdot\nabla\omega+\nt\psi\cdot\nabla\omega^0=\nt\psi^0\cdot\nabla\omega-\nt\omega^0\cdot\nabla\psi\\
&=\nt\psi^0\cdot\nabla\omega-g'(\psi^0(x,y))\nt\psi^0\cdot\nabla\psi\\
&=L^0\omega-g'(\psi^0(x,y))L^0\psi.\label{lvorloreln}
\end{align}
Thus, 
\begin{equation}\label{LvoriffA}
L_{\rm vor}\omega=\lambda\omega\text{ if and only if }
(\lambda-L^0)\omega=-g'(\psi^0(x,y))L^0\psi,\, \lambda\in\bbC,
\end{equation}
where $\omega$ and $\psi$ are related via $\omega=-\Delta\psi$.
Using \eqref{prop2} and \eqref{prop5}, we see that 
\begin{equation}\label{Lvoriff}
L_{\rm vor}\omega=\lambda\omega\text{ if and only if }
\omega=-g'(\psi^0(x,y))L^0(\lambda-L^0)^{-1}\psi,\, \Re (\lambda)\neq0.
\end{equation}
Using $\omega=-\Delta\psi$, the second equation in \eqref{Lvoriff} can be re-written as
\begin{equation}\label{LvoriffB}
A_\lambda\psi=-\Delta\psi+g'(\psi^0(x,y))L^0(\lambda-L^0)^{-1}\psi=0.
\end{equation}
We thus have the following fact first proved by a slightly different argument in \cite[Lemma 3.2]{ZL04}.
\begin{proposition}\label{LvorAlambda}
A non imaginary $\lambda$ belongs to $\sigma_{p}(L_{\rm vor})$ if and only if $0$ belongs to $\sigma_{p}(A_{\lambda})$. Specifically, if $\omega\in\dom(L_{\rm vor})=H_a^1(\bbT^{2})$ is an eigenfunction of the operator $L_{\rm vor}$ and $\Re(\lambda)\neq0$ then $\psi=-\Delta^{-1}\omega\in H_a^3\subset\dom(A_\lambda)$ satisfies $A_\lambda\psi=0$. Conversely, if $\psi\in\dom(A_\lambda)$ satisfies $A_\lambda\psi=0$ with $\Re(\lambda)\neq0$ then $\omega=-\Delta\psi\in\dom(L_{\rm vor})=H_a^1(\bbT^{2})$ is an eigenfunction of $L_{\rm vor}$.
\end{proposition}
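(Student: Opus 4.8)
The plan is to establish the two implications of Proposition \ref{LvorAlambda} by unwinding the algebraic identities \eqref{lvorloreln}--\eqref{LvoriffB} already set up, with the only genuine work being the elliptic regularity bookkeeping that guarantees the asserted Sobolev memberships. Throughout I will use the standing identification (from setting $m=1$ in \eqref{dia}) of $L_{\rm vor}$ as an unbounded operator on $L^2_a$ with domain $H^1_a$, and of $A_\lambda$ as an operator on $L^2_a$ with domain $H^2_a$, and the fact from \eqref{prop3} that $(\lambda - L^0)^{-1}$ is a bounded operator on $L^2_a$ whenever $\Re(\lambda)\neq 0$.

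First I would prove the forward implication. Suppose $\omega\in H^1_a$ satisfies $L_{\rm vor}\omega=\lambda\omega$ with $\Re(\lambda)\neq 0$, and set $\psi=-\Delta^{-1}\omega$; since $\omega\in H^1_a$, elliptic regularity for $-\Delta$ on $\bbT^2$ gives $\psi\in H^3_a$, and in particular $\psi\in H^2_a=\dom(A_\lambda)$. By \eqref{Lvoriff}, which is exactly the rewriting of $L_{\rm vor}\omega=\lambda\omega$ using \eqref{lvorloreln} together with \eqref{prop2} (so that $\lambda-L^0$ is invertible) and \eqref{prop5} (so that $g'(\psi^0)$ commutes with $L^0$ and hence with $(\lambda-L^0)^{-1}$), we have $\omega=-g'(\psi^0)L^0(\lambda-L^0)^{-1}\psi$. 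Substituting $\omega=-\Delta\psi$ and moving everything to one side yields $-\Delta\psi+g'(\psi^0)L^0(\lambda-L^0)^{-1}\psi=0$, which is precisely $A_\lambda\psi=0$ by \eqref{LvoriffB}. Thus $0\in\sigma_p(A_\lambda)$ with eigenfunction $\psi\in H^3_a$. One should note $\psi\neq 0$ because $\omega=-\Delta\psi\neq 0$.

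Next I would prove the converse. Suppose $\psi\in\dom(A_\lambda)=H^2_a$ satisfies $A_\lambda\psi=0$ with $\Re(\lambda)\neq 0$, and set $\omega=-\Delta\psi\in L^2_a$. The equation $A_\lambda\psi=0$ read through \eqref{LvoriffB} says $-\Delta\psi=-g'(\psi^0)L^0(\lambda-L^0)^{-1}\psi$, i.e. $\omega=-g'(\psi^0)L^0(\lambda-L^0)^{-1}\psi$. Here the right-hand side needs a regularity upgrade: a priori $\psi\in H^2_a$ only gives $\omega\in L^2_a$, but $L^0(\lambda-L^0)^{-1}\psi=(\lambda-L^0)^{-1}L^0\psi$ lies in $H^1_a$ since $L^0$ maps $H^2_a\to H^1_a$ and $(\lambda-L^0)^{-1}$ preserves $H^1_a$ (it commutes with $L^0$, hence with the smoothing, and is bounded on $L^2_a$; alternatively one checks directly that $(\lambda-L^0)^{-1}$ maps $H^1_a$ to $H^1_a$ using that $L^0$ has $H^1_a$-domain), and $g'(\psi^0)$ is smooth, so $\omega\in H^1_a=\dom(L_{\rm vor})$. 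Then rewinding \eqref{Lvoriff} back through \eqref{lvorloreln}---multiply by $(\lambda-L^0)$ to get $(\lambda-L^0)\omega=-g'(\psi^0)L^0\psi$, which is $\lambda\omega=L^0\omega-g'(\psi^0)L^0\psi=L_{\rm vor}\omega$---shows $L_{\rm vor}\omega=\lambda\omega$. Since $\psi\neq 0$ and $\psi$ has zero average we have $\omega=-\Delta\psi\neq 0$, so $\lambda\in\sigma_p(L_{\rm vor})$.

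The main obstacle, such as it is, is not conceptual but lies in the regularity chasing in the converse direction: one must make sure that a solution $\psi$ of $A_\lambda\psi=0$ known only to lie in $H^2_a$ really produces an honest $H^1_a$ eigenfunction $\omega$ of $L_{\rm vor}$, which requires knowing that $(\lambda-L^0)^{-1}$ maps $H^1_a$ into $H^1_a$ and that multiplication by $g'(\psi^0)$ preserves $H^1_a$ (both true since $\psi^0$ is smooth and $L^0$ commutes with $(\lambda-L^0)^{-1}$). In fact the same bootstrap can be iterated: once $\omega\in H^1_a$ one gets $\psi\in H^3_a$, consistent with the statement. The other point to record explicitly is the nonvanishing of the eigenfunctions under the correspondence $\psi\leftrightarrow\omega=-\Delta\psi$, which is immediate because $-\Delta$ is injective on zero-average functions. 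Everything else is a direct transcription of \eqref{lvorloreln}--\eqref{LvoriffB}.
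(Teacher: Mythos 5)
Your forward direction is exactly the paper's: substitute $\psi=-\Delta^{-1}\omega\in H_a^3$ into \eqref{Lvoriff}--\eqref{LvoriffB}, note $\psi\neq0$, done. The converse is also the same algebra, but the one place where real work is needed --- showing that $\omega=-\Delta\psi$ actually lands in $H_a^1=\dom(L_{\rm vor})$ when $\psi$ is only known to lie in $H_a^2$ --- is where your argument has a genuine gap. You reduce this to the claim that $(\lambda-L^0)^{-1}$ is bounded on $H_a^1$, and neither justification you offer is valid. Commuting with $L^0$ only controls the derivative of $(\lambda-L^0)^{-1}f$ \emph{along} streamlines, not the transverse derivative, so it cannot give $H^1$ preservation. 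And the fact that the paper declares $\dom L^0=H_a^1$ does not mean the resolvent maps into $H_a^1$: the operator $L^0$ with that domain is not closed (its graph norm only controls the streamline derivative), and the resolvent of the group generator maps $L^2_a$ onto the domain of the closure, which is strictly larger than $H_a^1$ in general. Worse, the claim itself can fail: writing $(\lambda-L^0)^{-1}=\int_0^\infty e^{-\lambda t}T^t\,dt$ with $T^t f=f\circ\varphi^t$, one has $\|T^t\|_{\cB(H^1)}\sim\|D\varphi^t\|_{L^\infty}$, which grows exponentially whenever the flow has hyperbolic stagnation points (permitted under Hypothesis \ref{periodicorbitsassump}, since $D_0$ is unrestricted); then the integral converges in $\cB(H^1)$ only for $|\Re(\lambda)|$ larger than that exponential rate, not for all nonimaginary $\lambda$. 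This loss of boundedness of the resolvent in $H^1$ is precisely the mechanism behind the unstable essential spectrum of the linearized Euler operator in Sobolev norms, so it is not a technicality one can wave away.

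The paper routes the regularity differently: from $A_\lambda\psi=0$ it writes $(-\Delta-g'(\psi^0))\psi=\phi$ with $\phi=-\lambda(\lambda-L^0)^{-1} g'(\psi^0)\psi$ and invokes \emph{elliptic} regularity for the second-order operator $-\Delta-g'(\psi^0)$ (citing \cite{ZL04a}) to bootstrap $\psi$ to $H_a^3$, whence $\omega=-\Delta\psi\in H_a^1$. The gain of one derivative comes from the Laplacian, not from any smoothing of $(\lambda-L^0)^{-1}$. You should replace your resolvent-boundedness claim with this elliptic-regularity step (or supply an honest proof that $\phi\in H^1_a$ along the lines of \cite{ZL04a}); everything else in your write-up, including the injectivity of $-\Delta$ on zero-average functions and the final bootstrap to $H_a^3$, matches the paper and is fine.
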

\begin{proof}
By \eqref{LvoriffA}-\eqref{LvoriffB}, if $\omega$ is an eigenfunction of $L_{\rm vor}$ then $\psi=-\Delta^{-1}\omega$ is an eigenfunction of $A_{\lambda}$. Conversely, assuming $\psi \in \ker(A_{\lambda})$, we have that $\psi \in H_{a}^{1}=\dom A_{\lambda}$ satisfies the elliptic equation $-\Delta\psi-g'(\psi^{0})\psi=\phi$, where we temporarily define $\phi=-(\lambda-L^0)^{-1}\lambda g'(\psi^0)\psi \in H_{a}^{1}$. Due to elliptic regularity the solution $\psi$ of the equation is in fact in $H_{a}^{1+2}=H_{a}^{3}$, see also \cite{ZL04a}. Then $\omega=-\Delta\psi \in H_{a}^{1}=\dom(L_{\rm vor})$. Running \eqref{LvoriffA}-\eqref{LvoriffB} backwards, we have that $\omega$ is an eigenfunction of $L_{\rm vor}$.
\end{proof}
We now discuss what happens to the operators $A_\lambda$ when $|\Re (\lambda)|$ is large, and when $\lambda\to0^+$. We begin with large $|\Re(\lambda)|$. %
Since  
\begin{equation}\label{L0est}
\|L^{0}\psi\|_{L^2}=\|\nabla^{\perp}\psi^{0}\cdot\nabla\psi\|_{L^2} \leq \|\nabla\psi^{0}\|_{L^{\infty}}\|\nabla\psi\|_{L^2} \leq c \|\nabla\psi\|_{L^2},
\end{equation}
we infer that there is a constant $c$ such that for all $\psi\in H_{a}^1$ and $\Re(\lambda)\neq0$ one has:
\begin{equation}\label{estlarge}
\begin{split}
\Big|\big\langle &  g'(\psi^0)L^0(\lambda-L^0)^{-1}\psi,\psi\big\rangle_{L^2}\Big|\\& \le\|g'(\psi^0)\|_{L_\infty}\|(\lambda-L^0)^{-1}\|_{\cB(L^2)}\|\psi\|_{L^2}\|L^0\psi\|_{L^2}\\
&\le c\|g'(\psi^0)\|_{L_\infty}|\Re(\lambda)|^{-1}\|\psi\|_{L^2}\|\nabla\psi\|_{L^2}
\quad\text{(by \eqref{prop3} and \eqref{L0est}})\\
&\le c_0\|g'(\psi^0)\|_{L_\infty}|\Re(\lambda)|^{-1}\|\nabla\psi\|_{L^2}^2
\quad\text{(by the Poincare inequality}).
\end{split}\end{equation}
Thus, for all $\psi\in H_{a}^2$ and sufficiently large $|\Re(\lambda)|$, the
Poincare inequality yields
\begin{align}
\Re\big\langle A_\lambda\psi,\psi\big\rangle_{L^2}&\ge
\|\nabla\psi\|_{L^2}-c_0\|g'(\psi^0)\|_{L_\infty}|\Re(\lambda)|^{-1}\|\nabla\psi\|_{L^2}^2 \nonumber \\&\ge c_1\|\psi\|_{L^2}^2,
\end{align}
and thus $A_\lambda$ has no eigenvalues with negative real parts provided $|\Re(\lambda)|$ is large enough, cf. \cite[Lemma 3.4]{ZL04}. In particular, there exists $\lambda_\infty>0$ such that 
\begin{equation}\lb{noneg}
\text{if $\lambda\ge\lambda_\infty$ then $A_\lambda$ has no negative eigenvalues.}\end{equation}

We will now discuss what happens to the operator $A_\lambda$ when  $\lambda\to0^+$. First, let us consider the operator $\lambda(\lambda-L^0)^{-1}$ from \eqref{defAl2}. To motivate the limiting behavior of this operator as $\lambda \to 0^{+}$, let us impose, for a second, an additional assumption that zero is an isolated eigenvalue of $L^0$ (this is indeed a strong assumption that holds provided all orbits of $\varphi^t$ are periodic uniformly with bounded periods). Then the usual expansion of the resolvent operator, see \cite[Section III.6.5]{Ka80}, around zero, $(\lambda-L^0)^{-1}=P_0\lambda^{-1}+D_0+D_1\lambda+\dots$, yields that $P_0=\lim_{\lambda\to0}\lambda(\lambda-L^0)^{-1}$ is the Riesz spectral projection for $L^0$ onto $\ker L^0$. A remarkable fact proved in \cite[Lemma 3.5]{ZL04} (in a different form and using a different method) is that this limiting relation holds without any additional assumptions but in the sense of strong convergence. In particular, going back to the general case, we have the following fact first proved by Z.\ Lin in \cite[Lemma 3.5]{ZL04}.
\begin{lemma}\lb{lem_proj}
Assume Hypothesis \ref{periodicorbitsassump} and let $P_0$ denote the orthogonal projection in $L^{2}_{a}$ onto the subspace  $\ker L^0=\{\phi\in H_a^1:
\nt\psi^0\cdot\nabla\phi=0\}$. Then for any $\phi\in L_a^2$ one has
$\lambda(\lambda-L^0)^{-1}\phi\to P_0\phi$ in $L^2$ as $\lambda\to0^+$.
\end{lemma}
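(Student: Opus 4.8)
The plan is to deduce the claim from the spectral theorem applied to the self-adjoint operator $B:=iL^0$ on $L^2_a$, which is self-adjoint precisely because $L^0$ is skew-adjoint by \eqref{prop1}. For real $\lambda>0$ we have $\lambda\notin\sigma(L^0)$ by \eqref{prop2}, so $(\lambda-L^0)^{-1}$ is bounded, and since $\lambda-L^0=\lambda+iB$ the Borel functional calculus for $B$ gives
\[
\lambda(\lambda-L^0)^{-1}=f_\lambda(B),\qquad f_\lambda(s):=\frac{\lambda}{\lambda+is}\quad(s\in\bbR).
\]
First I would record two elementary properties of this family: the uniform bound $|f_\lambda(s)|=\lambda(\lambda^2+s^2)^{-1/2}\le1$ for all $s\in\bbR$ and all $\lambda>0$, and the pointwise limit $f_\lambda(s)\to f_0(s):=\mathbf{1}_{\{0\}}(s)$ on $\bbR$ as $\lambda\to0^+$.

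Next, fix $\phi\in L^2_a$, let $E$ denote the projection-valued spectral measure of $B$, and let $\mu_\phi:=\langle E(\cdot)\phi,\phi\rangle$ be the associated finite scalar measure of total mass $\|\phi\|_{L^2}^2$. By the functional calculus,
\[
\big\|\lambda(\lambda-L^0)^{-1}\phi-E(\{0\})\phi\big\|_{L^2}^2=\big\|f_\lambda(B)\phi-f_0(B)\phi\big\|_{L^2}^2=\int_{\bbR}\big|f_\lambda(s)-f_0(s)\big|^2\,d\mu_\phi(s).
\]
The integrand is dominated by $(|f_\lambda|+|f_0|)^2\le4$ and converges to $0$ pointwise on $\bbR$, so the dominated convergence theorem yields $\lambda(\lambda-L^0)^{-1}\phi\to E(\{0\})\phi$ in $L^2$ as $\lambda\to0^+$.

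It then remains to identify the limiting operator. The range of $E(\{0\})$ is exactly $\ker B$: if $Bx=0$ then $0=\|Bx\|_{L^2}^2=\int_{\bbR}s^2\,d\mu_x(s)$ forces $\mu_x$ to be concentrated at the origin, hence $x=E(\{0\})x$; conversely $x=E(\{0\})x$ gives $Bx=\int_{\{0\}}s\,dE(s)\,x=0$. Since $B=iL^0$ with $i\ne0$, we have $\ker B=\ker L^0=\{\phi\in H^1_a:\nt\psi^0\cdot\nabla\phi=0\}$, and $E(\{0\})$ being the orthogonal projection onto this subspace we conclude $E(\{0\})=P_0$ by the definition of $P_0$. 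This gives the lemma.

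The argument is short, and the closest thing to an obstacle is conceptual rather than technical: one must resist the temptation to look for operator-norm convergence, since $\lambda(\lambda-L^0)^{-1}\to P_0$ holds only in the strong sense unless $0$ is isolated in $\sigma(L^0)$ (the extra assumption discussed just before the lemma), and the identification of $E(\{0\})$ with the orthogonal projection onto $\ker L^0$ is the step deserving care. I would also remark that this spectral-theoretic route does not in fact use Hypothesis \ref{periodicorbitsassump}; an equivalent alternative is to invoke the von Neumann mean ergodic theorem for the unitary group $\{T^t\}$ via the Abel-mean identity $\lambda(\lambda-L^0)^{-1}\phi=\lambda\int_0^\infty e^{-\lambda t}T^t\phi\,dt$, valid for $\lambda>0$.
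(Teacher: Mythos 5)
Your proof is correct, but it follows a genuinely different route from the paper's. The paper proves Lemma \ref{lem_proj} concretely: under Hypothesis \ref{periodicorbitsassump} it splits the torus into the fixed-point set $D_0$ (where the claim is trivial) and the union $\widehat{D}$ of periodic orbits, uses the co-area formula to realize $L^2(\widehat{D})$ as a direct integral $\int_{\mathcal J}^{\oplus} L^2_{per}(0,T(\rho))\,d\rho$, diagonalizes $L^0(\rho)=-d/dt$ by Fourier series on each orbit so that $\lambda(\lambda-L^0(\rho))^{-1}$ acts by the explicit multipliers $\lambda T(\rho)/(\lambda T(\rho)+2\pi i k)$, and then applies dominated convergence twice (in $k$ on each orbit, then in $\rho$ over $\mathcal J$). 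You instead invoke the spectral theorem for the self-adjoint operator $iL^0$ and run a single dominated-convergence argument for the multipliers $f_\lambda(s)=\lambda/(\lambda+is)$ against the spectral measure, identifying the limit $E(\{0\})$ with the orthogonal projection onto $\ker L^0$. Your argument is shorter, and you are right that it does not use Hypothesis \ref{periodicorbitsassump}: it needs only genuine skew-adjointness of $L^0$ on $\dom L^0=H^1_a$, which is property \eqref{prop1} (and is what Stone's theorem gives since $L^0$ generates the unitary group $T^t$); this is in effect the Abelian form of von Neumann's mean ergodic theorem, as you note. What the paper's longer computation buys is the explicit direct-integral picture of $L^0$, $P_0$ and the resolvent over the streamline foliation (formulas \eqref{directintfuncspace}--\eqref{P0formula}), which makes $P_0$ visible as orbit-averaging and ties the lemma to the decomposition of \cite{GC14} used elsewhere; your abstract argument yields $P_0$ only as the spectral projection $E(\{0\})$, with the concrete description of $\ker L^0$ left implicit. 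Both proofs are sound.
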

\begin{proof}
We recall Remark \ref{steadystateassumptions} about the assumptions regarding the steady state. Hypothesis \ref{periodicorbitsassump} implies that it is enough to check the lemma on $D_{0}\cup \widehat{D}$. On $D_{0}$, the operator $L^{0}|_{L^{2}(D_{0})}$ is the zero operator, and the orthogonal projection $P_{0}|_{L^{2}(D_{0})}$ is the identity operator and the statement of the lemma holds trivially. It is thus enough to check the lemma on $\widehat{D}$. Following \cite{ZL04} and using co-area formula, see \cite[Formula 16]{GC14}, the $L^{2}(\widehat{D})$ norm of any function $\phi \in L^{2}(D)$ restricted to the set $\widehat{D}$ can be represented as
\begin{equation}\label{coareal2}
\|\phi\|^{2}_{L^{2}(\widehat{D})}= \int_{-\infty}^{\infty} \bigg(\int_{(\psi^{0})^{-1}(\rho)} \frac{|\phi|^{2}}{|\nabla \psi^{0}|}ds\bigg) d\rho=\int_{\mathcal{J}} \bigg(\int_{\Gamma(\rho)} \frac{|\phi|^{2}}{|\nabla \psi^{0}|}ds\bigg) d\rho.
\end{equation}
Here, $ds$ is the induced measure on each streamline $\Gamma(\rho)$, and $d\rho$ is the Lebesgue measure on the index set $\mathcal{J}$ defined in \eqref{jdef}. By assumption, each $\Gamma(\rho)$ is diffeomorphic to the unit circle $\mathbb T$ with period $T(\rho)$. As time $t$ varies from $0$ to $T(\rho)$, a point $\varphi^{t}(x,y)$ on $\Gamma(\rho)$ traces one full orbit around $\Gamma(\rho)$. One can see that $dt= ds / |\nabla \psi^{0}|$ because $d\varphi^{t}/ dt = -\nabla^{\perp}\psi^{0}(\varphi^{t})$. Thus, one can rewrite \eqref{coareal2} as
\begin{equation}\label{coareal2t}
\|\phi\|^{2}_{L^{2}(\widehat{D})}= \int_{\mathcal{J}} \bigg(\int_{\Gamma(\rho)} |\phi_{\rho}(t)|^{2}dt \bigg) d\rho,
\end{equation}
where the restriction of $\phi$ to $\Gamma(\rho)$ is denoted by $\phi_{\rho}$ which is in the space $ L^{2}_{per}(0,T(\rho))$, i.e., the space of  $L^{2}$ functions on $(0,T(\rho))$ with periodic boundary conditions. In the language of direct integral decomposition of operators, see for example \cite[Section XIII.16]{RS78}, we can represent the space $L^{2}(\widehat{D})$ as 
\begin{equation}\label{directintfuncspace}
L^{2}(\widehat{D})=\int_{\mathcal J}^{\oplus} L^{2}_{per}(0,T(\rho))d \rho.
\end{equation}
We will use the direct integral decomposition of $L^{0}$ from \cite{GC14}. Fix a point $(x,y) \in \Gamma(\rho)$. We first note that $L^{0}|_{\Gamma(\rho)}f(x,y) = -\frac{d}{dt}|_{t=0} f(\varphi^{t}(x,y))$, where $\varphi^{t}(x,y)$ is the flow generated by the velocity field $\bu^{0}$ and $f$ is smooth. Thus, if $\phi_{\rho} \in L^{2}_{per}(0,T(\rho))$ with the Fourier series representation $\phi_{\rho}(t)=\sum_{k \in \bbZ} \widehat{\phi}_\rho(k)e^{2\pi ikt/T(\rho)}$, where $t \in [0,T(\rho))$, then 
\begin{align}\label{l0fourier}
L^{0}(\rho)\phi_{\rho}(t)&= -\frac{d}{dt}\phi_{\rho}(t) =-\frac{d}{dt} \sum_{k \in \bbZ} \widehat{\phi}_\rho(k)e^{2\pi ikt/T(\rho)} \nonumber \\&= - \sum_{k \in \bbZ} \frac{2 \pi ik}{T(\rho)} \widehat{\phi}_\rho(k)e^{2\pi ikt/T(\rho)},
\end{align}
and where $L^{0}(\rho)$ represents the restriction of $L^{0}$ to $L^{2}(\Gamma(\rho))$. Using \eqref{l0fourier} one can obtain a representation for the resolvent operator as
\begin{align*}
 &(\lambda - L^{0}(\rho))^{-1} \bigg(\sum_{k \in \bbZ} \widehat{\phi}_\rho(k)e^{2\pi ikt/T(\rho)}\bigg)= \sum_{k \in \bbZ} \frac{1}{\lambda+\frac{2\pi ik}{T(\rho)}}\widehat{\phi}_\rho(k) e^{2\pi ikt/T(\rho)}\\&=\sum_{k \in \bbZ} \frac{T(\rho)}{\lambda T(\rho) +2\pi ik}\widehat{\phi}_\rho(k) e^{2\pi ikt/T(\rho)}. \numberthis \label{resolventformula}
\end{align*}
We thus have, for each Fourier coefficient $\widehat{\phi}_\rho(k)$,
\begin{equation}\label{0limit}
 \lim_{\lambda \to 0^+}\lambda(\lambda - L^{0}(\rho))^{-1} \widehat{\phi}_\rho(k)= \lim_{\lambda \to 0^+} \frac{\lambda T(\rho)}{\lambda T(\rho) +2\pi ik}\widehat{\phi}_\rho(k)=\begin{cases}
   \widehat{\phi}_\rho(0) & \text{if } k = 0, \\
   0       & \text{if } k \neq 0.
  \end{cases}
\end{equation}
Let us denote by $P_{0}(\rho)$ the orthogonal projection in $L^{2}(\Gamma(\rho))$ onto the kernel of $L^{0}(\rho)$. We note here that in the language of direct integral decomposition of spaces and operators, see \cite[Section XIII.16]{RS78}, we can write, as proved in \cite{GC14}, that 
\begin{equation}\label{directintl0p0}
P_{0}= \int_{\mathcal J}^{\oplus} P_{0}(\rho)d \rho, \quad iL^{0}= \int_{\mathcal J}^{\oplus} iL^{0}(\rho)d \rho. 
\end{equation}
From \eqref{l0fourier}, we see that $L^{0}(\rho)\phi_{\rho}=0$ if and only if $\widehat{\phi}_\rho(k)=0$ for every $k \neq 0$. Thus we have that 
\begin{equation}\label{P0formula}
P_{0}(\rho)\phi_{\rho}=\widehat{\phi}_\rho(0).
\end{equation}
We now claim that if $\phi \in L^{2}(\widehat{D})$ and $\phi_{\rho} \in L^{2}(\Gamma(\rho))$ then for each $\rho$ one has
\begin{equation}\label{orbitconv}
\lim_{\lambda \to 0 ^{+}} \| (\lambda(\lambda-L^{0}(\rho))^{-1}-P_{0}(\rho))\phi_{\rho}\|_{L^{2}(\Gamma(\rho))}^{2} = 0.
\end{equation}
Indeed, using Parseval's theorem and formulae \eqref{P0formula} and \eqref{resolventformula}, we have,
\begin{equation}\label{fouriercoeffformula}
\| (\lambda(\lambda-L^{0}(\rho))^{-1}-P_{0}(\rho))\phi_{\rho}\|_{L^{2}(\Gamma(\rho))}^{2} = \sum_{k \in \bbZ \backslash \{ 0\}} \frac{\lambda^{2}T^{2}(\rho)}{\lambda^{2} T^{2}(\rho) +4\pi^{2} k^{2}}|\widehat{\phi}_\rho(k)|^{2}.
\end{equation}
For every $k \neq 0$, since $\lambda^{2}T^{2}(\rho)\le{\lambda^{2} T^{2}(\rho) +4\pi^{2} k^{2}}$,
we have that, 
\begin{equation}\label{dominatingterm}
\frac{\lambda^{2}T^{2}(\rho)}{\lambda^{2} T^{2}(\rho) +4\pi^{2} k^{2}}|\widehat{\phi}_\rho(k)|^{2} \leq |\widehat{\phi}_\rho(k)|^{2},
\end{equation}
and
\begin{equation}\label{dominatingfunction}
\sum_{k \in \bbZ \backslash \{ 0\}} \frac{\lambda^{2}T^{2}(\rho)}{\lambda^{2} T^{2}(\rho) +4\pi^{2} k^{2}}|\widehat{\phi}_\rho(k)|^{2} \leq \sum_{k \in \bbZ}|\widehat{\phi}_\rho(k)|^{2}.
\end{equation}
Since \eqref{dominatingterm} holds, one can apply Lebesgue dominated convergence theorem on the space $\ell^{2}(\bbZ)$ to conclude that,
\begin{align}
&\lim_{\lambda \to 0+}\sum_{k \in \bbZ \backslash \{ 0\}} \frac{\lambda^{2}T^{2}(\rho)}{\lambda^{2} T^{2}(\rho) +4\pi^{2} k^{2}}|\widehat{\phi}_\rho(k)|^{2} \nonumber \\&= \sum_{k \in \bbZ \backslash \{ 0\}} \lim_{\lambda \to 0+} \frac{\lambda^{2}T^{2}(\rho)}{\lambda^{2} T^{2}(\rho) +4\pi^{2} k^{2}}|\widehat{\phi}_\rho(k)|^{2}=0.
\end{align}
Hence \eqref{orbitconv} holds, as claimed. 
We are now ready to prove the main assertion in the lemma that $\lambda(\lambda-L^0)^{-1}\phi\to P_0\phi$ in $L^2(\widehat{D})$ as $\lambda\to0^+$. We need to show that
\begin{equation}
\lim_{\lambda \to 0^{+}} \|(\lambda(\lambda-L^{0})^{-1}-P_{0})\phi \|^{2}_{L^{2}(\widehat{D})} = 0. 
\end{equation}
Using formula \eqref{coareal2t} this reduces to showing that
\begin{equation}\label{neweq2}
\lim_{\lambda \to 0^{+}} \int_{\mathcal J} \| (\lambda(\lambda-L^{0}(\rho))^{-1}-P_{0}(\rho))\phi_{\rho}\|_{L^{2}(\Gamma(\rho))}^{2} d\rho = 0. 
\end{equation}
Using \eqref{fouriercoeffformula} and \eqref{dominatingfunction} we have that, applying Parseval's theorem twice,
\begin{equation}
\| (\lambda(\lambda-L^{0}(\rho))^{-1}-P_{0}(\rho))\phi_{\rho}\|_{L^{2}(\Gamma(\rho))}^{2} \leq \|\phi_{\rho}\|_{L^{2}(\Gamma(\rho))}^{2}.
\end{equation}
One can thus apply the Lebesgue dominated convergence theorem to the left hand side of \eqref{neweq2} and use \eqref{orbitconv} to conclude that,
\begin{align*}
&\lim_{\lambda \to 0^{+}} \int_{\mathcal J} \| (\lambda(\lambda-L^{0}(\rho))^{-1}-P_{0}(\rho))\phi_{\rho}\|_{L^{2}(\Gamma(\rho))}^{2} d\rho \\&=  \int_{\mathcal J} \lim_{\lambda \to 0^{+}} \| (\lambda(\lambda-L^{0}(\rho))^{-1}-P_{0}(\rho))\phi_{\rho}\|_{L^{2}(\Gamma(\rho))}^{2} d\rho=0,
\end{align*}
finishing the proof of the lemma.
\end{proof}
We now extend the definition of $A_{\lambda}$ in Definition \ref{dfnAl} as follows.
\begin{definition}\label{dfnA0} 
Introduce the operator $A_0$ in $L^2$ with $\dom(A_0)=\dom(\Delta)=H_a^2$ by the formula
\begin{equation}\lb{defA0}
A_0=-\Delta-g'(\psi^0(x,y))+g'(\psi^0(x,y))P_0.
\end{equation}
\end{definition}
By Lemma \ref{lem_proj} we infer that $A_\lambda\phi\to A_0\phi$ in $L^2$ as $\lambda\to0^+$ for each $\phi\in\dom(A_\lambda)$. %

To conclude this section we formulate the following important theorem proven by Zhiwu Lin in \cite{ZL04} and provide an outline of its proof. 
\begin{theorem}\label{LinThm}
Assume Hypothesis \ref{periodicorbitsassump} and consider the dispersion Lin's operator $A_{0}$ defined in Definition \ref{dfnA0}. Assume that $A_{0}$ has an odd number of negative eigenvalues and no kernel. Then $L_{\rm vor}$ has a positive isolated eigenvalue. 
\end{theorem}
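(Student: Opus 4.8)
The plan is to recast Lin's homotopy argument analytically, as a sign analysis of the real-valued function $\lambda\mapsto\mathcal D(\lambda,0)=\det_2(I-K_\lambda(0))$ on $(0,\infty)$, whose zeros are exactly the positive eigenvalues of $L_{\rm vor}$. First I would record the relevant properties of $\mathcal D$. From $A_\lambda-\mu=(I-K_\lambda(\mu))(-\Delta-\mu)$ one gets $K_\lambda(\mu)=-\bigl(-g'(\psi^0)+\lambda g'(\psi^0)(\lambda-L^0)^{-1}\bigr)(-\Delta-\mu)^{-1}$; since $(-\Delta-\mu)^{-1}$ is Hilbert--Schmidt on $L^2_a(\mathbb T^2)$ for $\mu$ outside $\sigma(-\Delta)\subseteq[1,\infty)$, the map $\mu\mapsto\mathcal D(\lambda,\mu)$ is analytic on $\mathbb R\setminus\sigma(-\Delta)\supseteq(-\infty,1)$, tends to $1$ as $\mu\to-\infty$ (because $\|(-\Delta-\mu)^{-1}\|_{\mathcal B_2}\to0$), and is real there because $A_\lambda$ commutes with complex conjugation (Remark~\ref{comcoj}). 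I would also use, from the $2$-modified determinant calculus of Section~4, that $\mathcal D(\lambda,\mu)=0$ precisely when $\mu\in\sigma_p(A_\lambda)$, with the order of the zero equal to the algebraic multiplicity; in particular, for $\lambda>0$, $\mathcal D(\lambda,0)=0$ iff $\lambda\in\sigma_p(L_{\rm vor})$ by Proposition~\ref{LvorAlambda}.

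Next I would determine the sign of $\mathcal D(\lambda,0)$ for small $\lambda>0$. Here one uses that $A_0$ is self-adjoint: by \eqref{prop5} $g'(\psi^0)$ commutes with $L^0$, hence with the orthogonal projection $P_0$, so $g'(\psi^0)P_0$ is bounded and self-adjoint and $A_0=-\Delta+(g'(\psi^0)P_0-g'(\psi^0))$ is a bounded self-adjoint perturbation of $-\Delta$, with real discrete spectrum. Hence the zeros of the real analytic function $\mathcal D(0,\cdot)$ in $(-\infty,0)$ are exactly the negative eigenvalues of $A_0$, their total order being the (odd, by hypothesis) number of them, and $0$ is not a zero since $A_0$ has no kernel. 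A real analytic function changes sign across precisely its odd-order zeros, and the number of odd-order zeros has the parity of the total order; so $\mathcal D(0,\cdot)$ changes sign an odd number of times on $(-\infty,0)$ and, since it is $1$ near $-\infty$, we conclude $\mathcal D(0,0)<0$. Then I would show $K_\lambda(0)\to K_0(0)$ in $\mathcal B_2$ as $\lambda\to0^+$ --- this is the strong convergence $\lambda(\lambda-L^0)^{-1}\to P_0$ of Lemma~\ref{lem_proj} composed on the right with the fixed Hilbert--Schmidt operator $(-\Delta)^{-1}$ --- so by continuity of $\det_2$ on $\mathcal B_2$, $\mathcal D(\lambda,0)\to\mathcal D(0,0)<0$; hence $\mathcal D(\lambda,0)<0$ for all small $\lambda>0$.

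For large $\lambda$, the estimate behind \eqref{noneg} in fact gives $\Re\langle A_\lambda\psi,\psi\rangle_{L^2}\ge c_1\|\psi\|_{L^2}^2$ once $\lambda\ge\lambda_\infty$, so $A_\lambda$ has no eigenvalue with $\Re z\le0$; thus $\mathcal D(\lambda,\cdot)$ is real, continuous and zero-free on $(-\infty,0]$ and equals $1$ near $-\infty$, whence $\mathcal D(\lambda,0)>0$ for $\lambda\ge\lambda_\infty$. Finally $\lambda\mapsto\mathcal D(\lambda,0)$ is continuous on $(0,\infty)$ (the only $\lambda$-dependent factor of $K_\lambda(0)$ is $\lambda(\lambda-L^0)^{-1}$, norm-continuous in $\lambda$, composed with the fixed $(-\Delta)^{-1}\in\mathcal B_2$), so being negative near $0^+$ and positive at $\lambda_\infty$ it vanishes at some $\lambda^*\in(0,\lambda_\infty)$ by the intermediate value theorem. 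Then $0\in\sigma_p(A_{\lambda^*})$, so $\lambda^*$ is a positive eigenvalue of $L_{\rm vor}$ by Proposition~\ref{LvorAlambda}; it is isolated since $L_{\rm vor}$ differs from $L^0$ by a compact operator (see \eqref{lvorloreln}) and $\sigma(L^0)\subseteq i\mathbb R$ by \eqref{prop2}, so $\sigma(L_{\rm vor})\cap\{\Re z>0\}$ consists of isolated eigenvalues of finite algebraic multiplicity.

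The one genuinely delicate step is the $\mathcal B_2$-convergence $K_\lambda(0)\to K_0(0)$ (and its locally uniform analogue in $\mu$, which underlies Proposition~\ref{A0Alambd} if one prefers to run the small-$\lambda$ comparison via a Rouch\'e contour in the $\mu$-plane rather than via $\mathcal D(0,0)$). Strong convergence of $\lambda(\lambda-L^0)^{-1}$ alone does not pass to the determinant, but composing with a fixed Hilbert--Schmidt operator restores it, by the standard fact that $S_\lambda\to S$ strongly with $\sup_\lambda\|S_\lambda\|<\infty$ implies $S_\lambda T\to ST$ in $\mathcal B_2$ for $T\in\mathcal B_2$ (dominated convergence applied to $\|S_\lambda T-ST\|_{\mathcal B_2}^2=\sum_j s_j^2\|(S_\lambda-S)f_j\|^2$ over a singular system of $T$). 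The remaining ingredients --- reality of $\mathcal D$ on the reals, the order-of-zero versus algebraic-multiplicity identity, and the parity bookkeeping --- are routine.
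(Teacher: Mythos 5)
Your proof is correct, and it reaches the conclusion by a genuinely different mechanism than the paper's. The paper (Remark \ref{STP}, Lemma \ref{newlem}, Proposition \ref{A0Alambd}) counts negative eigenvalues of $A_\lambda$: it applies the argument principle in the $\mu$-plane on a rectangle enclosing the negative spectrum of $A_0$, using the locally uniform $\mathcal B_2$-convergence $K_\lambda(\mu)\to K_0(\mu)$ of Lemma \ref{lemmaconv} and the logarithmic-derivative formula \eqref{Yaffor} to show that $A_\lambda$ inherits an odd number of negative eigenvalues for small $\lambda>0$, and then argues that, since $A_\lambda$ has no negative eigenvalues for $\lambda\ge\lambda_\infty$ by \eqref{noneg} and nonreal eigenvalues occur in conjugate pairs (Remark \ref{comcoj}), some eigenvalue must cross through $0$. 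You instead compress the parity information into the single real number $\mathcal D(\lambda,0)$: the sign of $\mathcal D(0,0)$ is $(-1)^{M}$, with $M$ the total multiplicity of the negative spectrum of the self-adjoint operator $A_0$, obtained by counting sign changes of the real-analytic function $\mathcal D(0,\cdot)$ on $(-\infty,0]$ normalized to $1$ at $\mu=-\infty$; a one-variable intermediate value theorem in $\lambda$ then replaces the eigenvalue-crossing argument. This buys a cleaner endgame --- no appeal to conjugate-pair symmetry or to the informal ``an eigenvalue must cross zero'' step --- at the price of one ingredient the paper never states explicitly: that the order of the zero of $\mu\mapsto\det_2(I-K_\lambda(\mu))$ at an eigenvalue $\mu_0$ of $A_\lambda$ equals the algebraic multiplicity of $\mu_0$. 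That identification is standard Birman--Schwinger bookkeeping (derivable from \eqref{Yaffor} and the resolvent identity, and simplified here by the self-adjointness of $A_0$), and the paper's own argument-principle step needs the same matching of ``zeros counted with order'' against ``eigenvalues counted with multiplicity,'' so it is not a gap relative to the paper's standard of rigor --- but you should state and justify it explicitly if you write this up. All remaining ingredients you invoke (Lemma \ref{lem_proj}, Lemma \ref{lemmaconv}, Propositions \ref{alambdaklmueq}, \ref{K0muA0} and \ref{LvorAlambda}, the coercivity estimate behind \eqref{noneg}, and the compact-perturbation argument for isolatedness) coincide with the paper's.
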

This result gives a new instability criterion for the steady state $\psi^{0}$, cf. \cite{ZL03, ZL04}. The proof given in \cite{ZL04} rests on an abstract theorem based on the use of the infinite determinants of the operators $e^{-A_{\lambda}}$. In the next section we will offer an alternative proof of Theorem \ref{LinThm} based on the use of Birman-Schwinger type operators. In addition, these operators, associated with $A_{\lambda}$ and $-\Delta$,  provide us with an analytic function of the spectral parameter whose zeros are exactly the eigenvalues of the operator $A_{\lambda}$. Computing this function at the value $\mu=0$ for the spectral parameter for $A_{\lambda}$, we will obtain a function of the spectral parameter for $L_{\rm vor}$ whose zeros are exactly the isolated eigenvalues of $L_{\rm vor}$.
\begin{remark}\label{STP}
The strategy for the proof of Theorem \ref{LinThm} is as follows. Assuming that $A_{0}$ has an odd number of negative eigenvalues, we show, see Proposition \ref{A0Alambd}, that $A_{\lambda}$ has the same number of negative eigenvalues provided $\lambda > 0$ are small enough. This is the only assertion to be proved to establish the result in Theorem \ref{LinThm} as the rest is easy. Indeed, as $\lambda > 0$ changes from small to large positive values, one of the eigenvalues of $A_{\lambda}$ should cross through zero since $A_{\lambda}$ has no negative eigenvalues for large $\lambda > 0$ by \eqref{noneg} and the eigenvalues of $A_{\lambda}$ may leave the real line only in pairs by Remark \ref{comcoj}. So, for some $\lambda > 0$ we have $0 \in \sigma_{p}(A_{\lambda})$ and thus $\lambda \in \sigma_{p}(L_{\rm vor})$ by Proposition \ref{LvorAlambda}, completing the proof of Theorem \ref{LinThm}.
\end{remark}

\section{Birman-Schwinger type operator associated with Lin's operator} 
In this section we introduce and study the family of Birman-Schwinger type operators $K_{\lambda}(\mu)$, $\mu \in \mathbb C \backslash \sigma(-\Delta)$, $Re(\lambda) \neq 0$, associated with Lin's dispersion operators $A_{\lambda}$, introduced in Section \ref{Linsop} and the negative Laplace operator $-\Delta$. As we will see, $K_{\lambda}(\mu) \in \mathcal B_{2}$, the set of Hilbert-Schmidt operators in $L^{2}_{a}$. We will also define the respective two-modified Fredholm determinants,
\begin{equation}\label{dfnDlm}
\mathcal D(\lambda,\mu)=\det_{2}(I_{L^{2}_{a}}-K_{\lambda}(\mu)), \quad Re(\lambda) \neq 0, \mu \in \mathbb C \backslash \sigma(-\Delta),
\end{equation} 
see, e.g. \cite{GK69, Si05}. For each $\lambda$, this will allow us to characterize the eigenvalues of $A_{\lambda}$ as zeros of the holomorphic function $\mathcal D(\lambda,\cdot)$ of the spectral parameter $\mu$. Using the relation between the spectra of the linearized Euler operator $L_{\rm vor}$ and $A_{\lambda}$ described in Proposition \ref{LvorAlambda}, we will characterize the nonimaginary eigenvalues of $L_{\rm vor}$ as zeros of the function $\mathcal D(\cdot, 0)$ holomorphic in $\lambda $ for $Re(\lambda) \neq 0$. These two results, given in Proposition \ref{alambdaklmueq} and Theorem \ref{newequivalence} constitute the main results in this paper.

In addition, we will introduce a family of Birman-Schwinger type operators $K_{0}(\mu)$, $\mu \in \mathbb C \backslash \sigma(-\Delta)$ associated with Lin's operator $A_{0}$, defined in Definition \ref{dfnA0}, and $-\Delta$, and extend the definition of $\mathcal D(0,\mu)$ to $\lambda=0$ by the same formula \eqref{dfnDlm}. This will allow us to characterize the eigenvalues of $A_{0}$ as zeros of the holomorphic function $\mathcal D(0,\cdot)$ of the spectral parameter $\mu$. It turns out that $K_{\lambda}(\mu) \to K_{0}(\mu)$ in $\mathcal B_{2}$ as $\lambda \to 0^{+}$ uniformly in $\mu$ on compact subsets of $\mathbb C \backslash \sigma(-\Delta)$, see Lemma \ref{lemmaconv} below. Using these facts, we will show that the number of negative eigenvalues of $A_{0}$ is equal to the number of negative eigenvalues of $A_{\lambda}$ provided $\lambda > 0$ is small enough, cf. Proposition \ref{A0Alambd}. As we have already explained in Remark \ref{STP}, this implies the conclusion of Lin's Theorem \ref{LinThm}, thus providing a new proof of this important assertion. 

Our first task is to define the Birman-Schwinger operators $K_{\lambda}(\mu)$. We recall \eqref{defAl2} for $A_{\lambda}$,
\begin{equation}\label{new4defAla}
A_{\lambda}=-\Delta-g'(\psi^0)+g'(\psi^0)\lambda(\lambda-L^0)^{-1},
\, \Re(\lambda)\neq0.
\end{equation}
Notice that if $\mu \in \mathbb C\backslash \sigma(-\Delta)$, then 
\begin{align}\label{alklequiv}
A_{\lambda}-\mu&=(-\Delta-\mu)-g'(\psi^0)+g'(\psi^0)\lambda(\lambda-L^0)^{-1}\nonumber \\&=\bigg(I-\big(g'(\psi^0)-g'(\psi^0)\lambda(\lambda-L^0)^{-1}\big)(-\Delta-\mu)^{-1}\bigg)(-\Delta-\mu) \nonumber\\&= (I-K_{\lambda}(\mu))(-\Delta-\mu), 
\end{align}
where we introduce the operators $K_{\lambda}(\mu)$ by the formula
\begin{equation}\label{klmudef}
K_{\lambda}(\mu)=\big(g'(\psi^0)-g'(\psi^0)\lambda(\lambda-L^0)^{-1}\big)(-\Delta-\mu)^{-1}, \Re(\lambda) \neq 0, \mu \in \mathbb C \backslash \sigma(-\Delta).
\end{equation}
Replacing $\lambda(\lambda-L^0)^{-1}$ in \eqref{klmudef} by $(\lambda-L^{0}+L^{0})(\lambda-L^0)^{-1}$, we see that an alternate expression for \eqref{klmudef} is given by the formula
\begin{equation}\label{klmudef2}
K_{\lambda}(\mu)=-g'(\psi^{0})L^{0}(\lambda-L^{0})^{-1}(-\Delta-\mu)^{-1}, \Re(\lambda) \neq 0, \mu \in \mathbb C \backslash \sigma(-\Delta).
\end{equation}
We notice that $K_\lambda(\mu)\in\cB_2$, the class of Hilbert-Schmidt operators. Indeed, the operators
$g'(\psi^0)$ and $L^0(\lambda-L^0)^{-1}$ are bounded, and the operator $(-\Delta-\mu)^{-1}$ is in $\cB_2$ as it is similar via the Fourier transform to the diagonal operator $\diag\{(|\bk|^{2}-\mu)^{-1}\}_{\bk\in\bbZ^2\setminus\{0\}}$, and the series $\sum_{\bk\in\bbZ^2\setminus\{0\}}(|\bk|^{2}-\mu)^{-2}$ converges. Since $K_\lambda(\mu) \in\cB_2(L^2)$, the following  $2$-modified determinant exists: 
\begin{equation}\label{new455}
\mathcal D(\lambda, \mu)=\det_{2}(I_{L^{2}_{a}}-K_{\lambda}(\mu))=\prod_{n=1}^\infty\big((1-\varkappa_\lambda^{(n)})e^{\varkappa_\lambda^{(n)}}\big),
\end{equation} 
where $\varkappa_\lambda^{(n)}$ denote the eigenvalues of the operator $K_\lambda(\mu)$, and we remark that $\varkappa_\lambda^{(n)}\to0$ as $n\to\infty$. We refer to \cite[Chapter 9]{Si05} or \cite[Sec. IV.2]{GK69} or \cite[Sec. I.7]{Y92} for properties of the two-modified determinants. 

We note that for each fixed $\lambda \in \mathbb C \backslash \sigma(-\Delta)$ the function $K_{\lambda}:\mathbb C \backslash \sigma(-\Delta) \to \mathcal B_{2}$ of the parameter $\mu$ is holomorphic with the derivative 
\begin{equation}
\frac{d K_{\lambda}}{d\mu}=K_{\lambda}(\mu)(-\Delta-\mu)^{-1},
\end{equation}
and for each fixed $\mu \in \mathbb C \backslash \sigma(-\Delta)$ the function $K_{(\cdot)}(\mu):\mathbb C \backslash i \mathbb R \to \mathcal B_{2}$ is a holomorphic function of the parameter $\lambda$. We also note that $0 \notin \sigma(-\Delta)$ because the Laplace operator is being considered on the space $L^{2}_{a}$ of functions with zero average, and is therefore similar via Fourier transform to the diagonal operator $\diag\{\|\mathbf k\|^{2}\}_{\mathbf k \in \mathbb Z^{2}\backslash \{0\}}$. Thus the operators 
\begin{equation}\label{defklm0}
K_{\lambda}(0)=(g'(\psi^{0})-g'(\psi^{0})\lambda(\lambda-L^{0})^{-1})(-\Delta)^{-1}, \quad \Re(\lambda)\neq0,
\end{equation}
are well defined. 
We have the following version of the Birman-Schwinger principle.
\begin{proposition}\label{alambdaklmueq}
Assume Hypothesis \ref{4hyp} and recall formulas \eqref{new4defAla}, \eqref{klmudef} and \eqref{new455}. The following assertions are equivalent for each $\lambda \in \mathbb C \backslash i \mathbb R$ and $\mu \in \mathbb C \backslash \sigma(-\Delta)$:
\begin{align}
&(i)\quad \mu \in \sigma(A_{\lambda})\backslash \sigma(-\Delta),\\ 
&(ii)\quad  1 \in \sigma(K_{\lambda}(\mu)),\\
&(iii)\quad \mathcal D(\lambda,\mu):= \det_{2}(I_{L^{2}_{a}}-K_{\lambda}(\mu))=0.
\end{align}
\end{proposition}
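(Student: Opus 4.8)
The plan is to establish the equivalence of (i), (ii), (iii) by moving around the triangle in a natural order, exploiting the factorization identity \eqref{alklequiv}, namely $A_\lambda-\mu=(I-K_\lambda(\mu))(-\Delta-\mu)$, together with the fact that $(-\Delta-\mu)$ is boundedly invertible on $L^2_a$ precisely because $\mu\notin\sigma(-\Delta)$. First I would prove the equivalence of (i) and (ii). Since $(-\Delta-\mu)^{-1}\in\cB(L^2_a)$, the operator $A_\lambda-\mu$ is injective with dense range (equivalently, boundedly invertible, since $A_\lambda$ has compact resolvent so $\mu\in\sigma(A_\lambda)$ is automatically an eigenvalue) if and only if $I-K_\lambda(\mu)$ is. Thus $\mu\in\sigma(A_\lambda)$ iff $1\in\sigma(K_\lambda(\mu))$; one must note that on the domain issue $\dom(A_\lambda)=H^2_a=\dom(-\Delta)$ is exactly the range of $(-\Delta-\mu)^{-1}$, so the factorization is an honest operator identity on $H^2_a$, and conversely that a solution $\psi$ of $K_\lambda(\mu)\psi=\psi$ lies in $H^2_a$ by elliptic regularity (as in the proof of Proposition \ref{LvorAlambda}), so that setting $\psi=(-\Delta-\mu)^{-1}f$ relates kernel elements of $I-K_\lambda(\mu)$ with kernel elements of $A_\lambda-\mu$. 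Because $K_\lambda(\mu)\in\cB_2$ is compact, $1\in\sigma(K_\lambda(\mu))$ is equivalent to $1$ being an eigenvalue, i.e. $\ker(I-K_\lambda(\mu))\neq\{0\}$.

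Next I would prove the equivalence of (ii) and (iii). This is the standard property of the $2$-modified Fredholm determinant: for a Hilbert-Schmidt operator $T$, one has $\det_2(I-T)=0$ if and only if $1\in\sigma(T)$, equivalently $\ker(I-T)\neq\{0\}$. In terms of the product representation \eqref{new455}, $\det_2(I-K_\lambda(\mu))=\prod_n(1-\varkappa_\lambda^{(n)})e^{\varkappa_\lambda^{(n)}}$ vanishes iff some eigenvalue $\varkappa_\lambda^{(n)}=1$, and since the nonzero eigenvalues of a compact operator exhaust $\sigma(K_\lambda(\mu))\setminus\{0\}$, this is exactly condition (ii) (recalling $0\neq 1$). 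I would cite \cite[Chapter 9]{Si05} or \cite[Sec. IV.2]{GK69} for the precise statement that $\det_2(I-T)\neq0$ iff $I-T$ is invertible.

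Assembling these, (i) $\Leftrightarrow$ (ii) $\Leftrightarrow$ (iii) follows. I expect the main (though modest) obstacle to be the careful bookkeeping in the (i) $\Leftrightarrow$ (ii) step: one must check that the purely algebraic factorization \eqref{alklequiv} genuinely transfers spectral information, which requires (a) observing that $A_\lambda$ and $-\Delta$ have the same domain $H^2_a$ so that $A_\lambda-\mu=(I-K_\lambda(\mu))(-\Delta-\mu)$ holds as operators on $H^2_a$ rather than merely formally, (b) using that $A_\lambda$ has compact resolvent (it is a relatively compact perturbation of $-\Delta$, as noted after Definition \ref{dfnAl}), so that membership of $\mu$ in $\sigma(A_\lambda)$ forces $\mu$ to be an eigenvalue and hence $\ker(A_\lambda-\mu)\neq\{0\}$, and (c) the elliptic-regularity argument showing any $\psi$ with $K_\lambda(\mu)\psi=\psi$ automatically lies in $H^2_a$, so the correspondence $f\mapsto(-\Delta-\mu)^{-1}f$ between $\ker(I-K_\lambda(\mu))$ and $\ker(A_\lambda-\mu)$ is a genuine bijection. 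None of these is deep, but they are the points where the argument could go wrong if handled loosely.
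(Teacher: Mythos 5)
Your proposal is correct and follows essentially the same route as the paper: the equivalence of (i) and (ii) via the factorization $A_\lambda-\mu=(I-K_\lambda(\mu))(-\Delta-\mu)$, and the equivalence of (ii) and (iii) via the standard vanishing criterion for $\det_2$ (the paper cites \cite[Theorem 9.2(e)]{Si05}). The extra bookkeeping you supply (matching domains, compact resolvent, regularity of fixed points of $K_\lambda(\mu)$) is a sound elaboration of what the paper leaves implicit in the phrase ``direct consequence of the formula.''
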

\begin{proof}
The equivalence of (i) and (ii) is a direct consequence of the formula \[A_{\lambda}-\mu=(I-K_{\lambda}(\mu))(-\Delta-\mu) \mbox{ for } \mu \in \mathbb C \backslash \sigma(-\Delta),\] see formula \eqref{alklequiv}. The equivalence of (ii) and (iii) is just a general fact from the theory of two-modified determinants, see, e.g. \cite[Theorem 9.2(e)]{Si05}.
\end{proof}
We are ready to formulate the main result of this section, Theorem \ref{newequivalence}, which gives a characterization of the unstable eigenvalue of $L_{\rm vor}$ in terms of the zeros of the function $\mathcal D(\lambda,0):=\det_{2,L^{2}}(I-K_{\lambda}(0))$ and thus gives us a new way of detecting unstable eigenvalues of the operator $L_{\rm vor}$. In particular, it reduces the study of instability of the linearized vorticity operator $L_{\rm vor}$ to the study of the 2-modified determinant $\det_{2, L^2}(I-K_\lambda(0))$ associated with the operator $K_{\lambda}(0)$ from \eqref{defklm0}.

\begin{theorem}\label{newequivalence}
The following assertions are equivalent:
\begin{align}
&(i) \quad \lambda \in \sigma(L_{\rm vor}) \backslash i\mathbb R,\\
&(ii) \quad 1 \in \sigma(K_{\lambda}(0)),\\ 
&(iii) \quad \mathcal D(\lambda,0):=\det_{2, L^2}(I-K_\lambda(0))=0.
\end{align}
\end{theorem}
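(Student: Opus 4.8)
The plan is to read Theorem \ref{newequivalence} off as the composition of two facts already in hand: the Birman--Schwinger principle of Proposition \ref{alambdaklmueq}, specialized to the value $\mu=0$ of the spectral parameter for $A_\lambda$, and the spectral correspondence between $L_{\rm vor}$ and $A_\lambda$ of Proposition \ref{LvorAlambda}. The only preliminary observation needed is that $0\notin\sigma(-\Delta)$ on $L^2_a$, since $-\Delta$ is unitarily equivalent via the Fourier transform to $\diag\{|\bk|^2\}_{\bk\in\bbZ^2\setminus\{0\}}$; hence $\mu=0$ is an admissible value throughout Section 4, the operator $K_\lambda(0)$ of \eqref{defklm0} is a well-defined Hilbert--Schmidt operator, and $\mathcal D(\lambda,0)=\det_2(I-K_\lambda(0))$ is defined for every $\lambda$ with $\Re(\lambda)\neq0$.

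First I would record the equivalence of (ii) and (iii) and link both to the spectrum of $A_\lambda$. Applying Proposition \ref{alambdaklmueq} with $\mu=0$ shows, for each $\lambda\in\bbC\setminus i\bbR$, that the three conditions $0\in\sigma(A_\lambda)\setminus\sigma(-\Delta)$, $1\in\sigma(K_\lambda(0))$, and $\mathcal D(\lambda,0)=0$ are mutually equivalent; since $0\notin\sigma(-\Delta)$, the first of these is simply $0\in\sigma(A_\lambda)$. This already gives (ii)$\Leftrightarrow$(iii) and reduces the theorem to showing that $\lambda\in\sigma(L_{\rm vor})\setminus i\bbR$ if and only if $0\in\sigma(A_\lambda)$.

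To obtain that last equivalence I would pass from spectra to point spectra on each side. For $A_\lambda$: it is a relatively compact perturbation of the self-adjoint operator $-\Delta$ (as noted just after Definition \ref{dfnAl}), so $\sigma_{\mathrm{ess}}(A_\lambda)=\sigma_{\mathrm{ess}}(-\Delta)=\sigma(-\Delta)$, and since $0\notin\sigma(-\Delta)$ one has $0\in\sigma(A_\lambda)$ if and only if $0\in\sigma_p(A_\lambda)$, with $0$ then an isolated eigenvalue of finite algebraic multiplicity. For $L_{\rm vor}$: by \eqref{lvorloreln}, $L_{\rm vor}=L^0-g'(\psi^0)L^0(-\Delta)^{-1}$, and $g'(\psi^0)L^0(-\Delta)^{-1}$ is compact on $L^2_a$ because $(-\Delta)^{-1}$ maps $L^2_a$ into $H_a^2$, $L^0$ maps $H_a^2$ boundedly into $H_a^1$, and the embedding $H_a^1\hookrightarrow L^2_a$ is compact; since $L^0$ is skew-adjoint with $\sigma(L^0)\subseteq i\bbR$ by \eqref{prop1}--\eqref{prop2}, Weyl's theorem gives $\sigma_{\mathrm{ess}}(L_{\rm vor})\subseteq i\bbR$, so every point of $\sigma(L_{\rm vor})$ off the imaginary axis is an isolated eigenvalue, i.e.\ $\sigma(L_{\rm vor})\setminus i\bbR=\sigma_p(L_{\rm vor})\setminus i\bbR$. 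Since Proposition \ref{LvorAlambda} asserts exactly that a non-imaginary $\lambda$ lies in $\sigma_p(L_{\rm vor})$ iff $0\in\sigma_p(A_\lambda)$, the chain closes: (i) $\Leftrightarrow$ $0\in\sigma_p(A_\lambda)$ $\Leftrightarrow$ $0\in\sigma(A_\lambda)$ $\Leftrightarrow$ (ii) $\Leftrightarrow$ (iii).

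I do not anticipate a genuine obstacle: the statement is essentially bookkeeping that threads Propositions \ref{LvorAlambda} and \ref{alambdaklmueq} together. The two places calling for a little care are the upgrades from membership in the spectrum to membership in the point spectrum, which both rest on localizing the essential spectrum---of $A_\lambda$ inside $\sigma(-\Delta)$ and of $L_{\rm vor}$ inside $i\bbR$---via the relative compactness already invoked in the excerpt, and the small point that $\sigma(A_\lambda)\setminus\sigma(-\Delta)$ collapses to $\sigma(A_\lambda)$ exactly because $\mu=0\notin\sigma(-\Delta)$.
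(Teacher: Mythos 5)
Your argument is correct and is essentially the paper's own first proof: combine Proposition \ref{LvorAlambda} with Proposition \ref{alambdaklmueq} at $\mu=0$, using $0\notin\sigma(-\Delta)$ on $L^2_a$. The only difference is that you explicitly justify the passage between $\sigma$ and $\sigma_p$ on both sides via relative compactness and the location of the essential spectra, a point the paper's proof leaves implicit (it also offers a second, independent proof of (i)$\Leftrightarrow$(ii) via the $AB$ versus $BA$ commutation trick, which you do not need).
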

\begin{proof}
We will offer two proofs of the main conclusion of the theorem. We note that the equivalence of (ii) and (iii) is just a general fact from the theory of two-modified determinants, see again \cite[Theorem 9.2(e)]{Si05}. Our first proof shows that (i) is equivalent to (iii) while the second proof shows that (i) is equivalent to (ii). 

{\em First proof} (i) $\iff$ (iii). By Proposition \ref{LvorAlambda} we have that $\lambda \in \sigma(L_{\rm vor})$ if and only if $0 \in \sigma(A_{\lambda})$. We now apply the equivalence of assertions (i) and (iii) of Proposition \ref{alambdaklmueq} for $\mu=0$ and conclude that assertions (i) and (iii) of the current theorem are equivalent. 

{\em Second proof} (i) $\iff$ (ii).  We return to representation \eqref{lvorloreln} of the operator $L_{\rm vor}$ and discuss a Birman-Schwinger operator associated with the unperturbed operator $L^0$ and the perturbed operator $L_{\rm vor}$. Recalling that $\omega$ and $\psi$ are related via $\omega=-\Delta\psi$, equation \eqref{lvorloreln} yields
\begin{equation}\label{vortform}
L_{\rm vor}\omega=L^0\big(I+g'(\psi^0(x,y))\Delta^{-1}\big)\omega,
\end{equation}
thus implying, for any $\lambda\in\bbC\setminus\sigma(L^0)$, that 
\begin{equation}\label{comm}\begin{split}
\lambda-L_{\rm vor}&=\lambda-L^0-g'(\psi^0(x,y))L^0\Delta^{-1}\\
&=\big(I-g'(\psi^0(x,y))L^0\Delta^{-1}(\lambda-L^0)^{-1}\big)(\lambda-L^0)\\
&=(I-\widetilde{K_{\lambda}}(0))(\lambda-L^0),
\end{split}
\end{equation}
where we temporarily denote $\widetilde{K_{\lambda}}(0)=g'(\psi^0(x,y))L^0\Delta^{-1}(\lambda-L^0)^{-1}$. We write $A=g'(\psi^0(x,y))L^0\Delta^{-1}$ and $B=(\lambda-L^0)^{-1}$ and note that $\widetilde{K_{\lambda}}(0)=AB$ while, using \eqref{klmudef2} for $\mu=0$, we also have $K_\lambda(0)=BA$ since the operators $g'(\psi^0(x,y))L^0$ and $(\lambda-L^0)^{-1}$ commute.
By standard results, we have that the operator $I-\widetilde{K_{\lambda}}(0)$ is invertible if and only if the operator $I-K_{\lambda}(0)$ is invertible and $\sigma(K_\lambda(0))\backslash\{0\}=\sigma(\widetilde{K_{\lambda}}(0))\backslash\{0\}$.  

Since
$\lambda-L_{\rm vor}=(I-\widetilde{K_{\lambda}}(0))(\lambda-L^0)$,
 for $\lambda \in \mathbb C \backslash \sigma(L^{0})$ and $\sigma(L^{0}) \subset i \mathbb R$, we know that  $\lambda \in \sigma(L_{\rm vor}) \backslash i\mathbb R$ if and only if $1 \in \sigma(\widetilde{K}_{\lambda}(0))$. This
shows that assertions (i) and (ii) of the current theorem are equivalent.
\end{proof}
We will conclude this section by completing the new proof of Lin's Theorem \ref{LinThm} outlined in Remark \ref{STP}. As indicated in this remark, the only missing part of the proof is the following assertion. 
\begin{proposition}\label{A0Alambd}
Assume that the operator $A_{0}$ has an odd number of negative eigenvalues and no kernel. Then $A_{\lambda}$ has the same number of negative eigenvalues provided $\lambda >0$ is small enough. 
\end{proposition}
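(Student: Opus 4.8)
The plan is to identify the number of negative eigenvalues of $A_\lambda$ with the number of zeros, counted with multiplicity, of the holomorphic function $\mu\mapsto\mathcal D(\lambda,\mu)=\det_{2}(I-K_\lambda(\mu))$ inside a fixed contour, and then to run a Rouch\'e-type argument using the convergence $K_\lambda(\mu)\to K_0(\mu)$ in $\mathcal B_2$ from Lemma \ref{lemmaconv} together with the continuity of $\det_2$ on $\mathcal B_2$. Throughout, the Birman--Schwinger correspondence of Proposition \ref{alambdaklmueq} and its $\lambda=0$ analogue translate zeros of $\mathcal D(\lambda,\cdot)$ into eigenvalues of $A_\lambda$; one also uses the standard fact that the order of a zero of a modified Fredholm determinant equals the algebraic multiplicity of the corresponding eigenvalue, which follows from the factorization $A_\lambda-\mu=(I-K_\lambda(\mu))(-\Delta-\mu)$ in \eqref{alklequiv} together with \cite[Ch.~9]{Si05}, \cite[Sec.~IV.2]{GK69} and the Gohberg--Sigal-type multiplicity theory for the analytic operator family $I-K_\lambda(\cdot)$.

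First I would establish a localization of the spectrum that is uniform in $\lambda$. Using form \eqref{new4defAla} of $A_\lambda$, the bound $\|\lambda(\lambda-L^0)^{-1}\|_{\mathcal B(L^2)}\le1$ for $\lambda>0$ from \eqref{prop3} (and $\|P_0\|\le1$ for $\lambda=0$), and an accretivity estimate of exactly the type in \eqref{estlarge}, one gets, for all $\lambda\ge0$ and every unit $\psi\in H_a^2$, that $\Re\langle A_\lambda\psi,\psi\rangle_{L^2}\ge\|\nabla\psi\|_{L^2}^2-R$ and $|\Im\langle A_\lambda\psi,\psi\rangle_{L^2}|\le b$, with $b:=\|g'(\psi^0)\|_{L^\infty}$ and $R:=2b$. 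Hence every eigenvalue of $A_\lambda$ with negative real part lies in the fixed compact rectangle $\mathcal R:=\{z\in\mathbb C:-R\le\Re z\le0,\ |\Im z|\le b\}$, and $\mathcal R\subset\mathbb C\setminus\sigma(-\Delta)$ since $\sigma(-\Delta)\subset(0,\infty)$. Moreover, $-\Delta$ has compact resolvent on $L^2_a$ and $A_\lambda$ is a relatively compact perturbation of it, so $\sigma(A_\lambda)$ is discrete and $\sigma(A_\lambda)\cap\mathcal R$ is finite with finite algebraic multiplicities. Since $A_0$ is self-adjoint with trivial kernel, $\sigma(A_0)\cap\mathcal R=\{\mu_1,\dots,\mu_p\}\subset(-R,0)$ is exactly the set of distinct negative eigenvalues of $A_0$, of total algebraic multiplicity equal to the prescribed odd number $N$.

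Next I would fix a contour and compare. Choose $r>0$ so small that the disks $\overline{B(\mu_j,r)}$ are pairwise disjoint, contained in $\mathcal R\cap\{\Re z<0\}$, and meet $\sigma(A_0)$ only at $\mu_j$, and set $\Omega:=\bigcup_j B(\mu_j,r)$. By the $\lambda=0$ version of Proposition \ref{alambdaklmueq} and the multiplicity identification above, $\mathcal D(0,\cdot)$ has exactly $N$ zeros in $\Omega$, counted with multiplicity, and does not vanish on the compact set $\mathcal R\setminus\Omega$ (which contains $\partial\Omega$); set $c_0:=\min_{\mathcal R\setminus\Omega}|\mathcal D(0,\cdot)|>0$. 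By Lemma \ref{lemmaconv} and the continuity of $\det_2$, $\mathcal D(\lambda,\cdot)\to\mathcal D(0,\cdot)$ uniformly on the compact set $\mathcal R$ as $\lambda\to0^+$, so there is $\lambda_0>0$ such that $\sup_{\mathcal R}|\mathcal D(\lambda,\cdot)-\mathcal D(0,\cdot)|<c_0$ for $0<\lambda<\lambda_0$. For such $\lambda$: first, $\mathcal D(\lambda,\cdot)$ is nonvanishing on $\mathcal R\setminus\Omega$, so by the uniform localization and Proposition \ref{alambdaklmueq}, $A_\lambda$ has no eigenvalue in $\{\Re z<0\}$ outside $\Omega$; second, Rouch\'e's theorem on $\partial\Omega$ (where $|\mathcal D(\lambda,\cdot)-\mathcal D(0,\cdot)|<c_0\le|\mathcal D(0,\cdot)|$) shows that $\mathcal D(\lambda,\cdot)$ has exactly $N$ zeros in $\Omega$, counted with multiplicity, i.e.\ $A_\lambda$ has exactly $N$ eigenvalues in $\{\Re z<0\}$, counted with algebraic multiplicity --- the same (odd) number as $A_0$. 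If one prefers to count only eigenvalues lying on $(-\infty,0)$, then by Remark \ref{comcoj} the non-real ones among these $N$ occur in conjugate pairs, so that count has the same parity $N$, which is all that Remark \ref{STP} requires.

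The step I expect to be the main obstacle is the uniform localization together with the ``no escape'' conclusion: one must prevent an eigenvalue of $A_\lambda$ from drifting, as $\lambda\to0^+$, into $\{\Re z<0\}$ far from the cluster $\{\mu_1,\dots,\mu_p\}$. This is exactly why the argument needs the $\lambda$-uniform numerical-range bound, which traps all such eigenvalues in the fixed compact $\mathcal R$, in tandem with the $\mathcal B_2$-convergence of $K_\lambda(\mu)$ that is uniform in $\mu$ on compact subsets of $\mathbb C\setminus\sigma(-\Delta)$, rather than merely the strong convergence $A_\lambda\to A_0$. A secondary point requiring care is the bookkeeping that matches ``number of negative eigenvalues'' with ``number of zeros of $\mathcal D$ counted with multiplicity'' and, in the non-self-adjoint case $\lambda>0$, the fact that the relevant eigenvalues are caught inside the union of small disks around the $\mu_j$ rather than remaining on the real axis.
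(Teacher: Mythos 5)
Your proof is correct and follows essentially the same route as the paper: both identify the negative eigenvalues of $A_\lambda$ with zeros of the holomorphic function $\mathcal D(\lambda,\cdot)$ and transfer the count from $\lambda=0$ to small $\lambda>0$ using the $\mathcal B_2$-convergence of Lemma \ref{lemmaconv} and the continuity of $\det_2$ --- the paper via the argument principle applied to the logarithmic derivative on the boundary of one long thin rectangle (Lemma \ref{newlem}), you via Rouch\'e's theorem on small disks around the $\mu_j$. Your $\lambda$-uniform numerical-range bound, which traps every eigenvalue of $A_\lambda$ with negative real part in a fixed compact rectangle, is a worthwhile refinement: the paper only counts eigenvalues inside its chosen rectangle $[-a,\varepsilon]\times[-\varepsilon,\varepsilon]$ and leaves implicit the fact that no negative eigenvalue of $A_\lambda$ can appear outside it, and your explicit handling of non-real eigenvalues via Remark \ref{comcoj} likewise makes precise a point both arguments ultimately rely on.
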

In order to prove this proposition (and thus finish the proof of Lin's Theorem \ref{LinThm}) we will need to involve Birman-Schwinger type operators $K_{0}(\mu) \in \mathcal B_{2}$ associated with the operator $A_{0}$ and $-\Delta$, and the respective determinant $\mathcal D(0,\mu)=\det_{2,L^{2}}(I-K_{0}(\mu))$. Recalling formula \eqref{defA0}, 
\begin{equation}\label{newdfA0ag}
A_0=-\Delta-g'(\psi^0(x,y))+g'(\psi^0(x,y))P_0,
\end{equation}
similarly to \eqref{alklequiv} we arrive at the formula
\begin{equation}
A_{0}-\mu=(I-K_{0}(\mu))(-\Delta-\mu),
\end{equation}
where we introduce the operator $K_{0}(\mu)$ as follows:
\begin{equation}\label{K0mu}
K_{0}(\mu)=(g'(\psi^{0})-g'(\psi^{0})P_{0})(-\Delta-\mu)^{-1}, \quad \mu \in \mathbb C \backslash \sigma(-\Delta).
\end{equation}
We notice that the only difference with \eqref{klmudef} is that the operator $\lambda(\lambda-L^{0})^{-1}$ is replaced by $P_{0}$. As before, it is easy to see that $K_{0}:\mathbb C \backslash \sigma(-\Delta) \to \mathcal B_{2}$ is a holomorphic function and $d K_{0}/ d \mu = K_{0}(\mu)(-\Delta-\mu)^{-1}$. Similarly to Proposition \ref{alambdaklmueq} one shows the following fact. 
\begin{proposition}\label{K0muA0}
Assume Hypothesis \ref{periodicorbitsassump} and recall formulas \eqref{newdfA0ag} and \eqref{K0mu}. The following assertions are equivalent for $\mu \in \mathbb C \backslash \sigma(-\Delta)$:
\begin{align}
&(i) \quad \mu \in \sigma(A_{0})\backslash \sigma(-\Delta), \\
&(ii) \quad 1 \in \sigma(K_{0}(\mu)), \\
&(iii) \quad \mathcal D(0,\mu):= \det_{2,L^{2}}(I-K_{0}(\mu))=0.
\end{align}
\end{proposition}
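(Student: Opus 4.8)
The plan is to follow, nearly verbatim, the proof of Proposition~\ref{alambdaklmueq}, replacing the bounded normal operator $\lambda(\lambda-L^0)^{-1}$ by the orthogonal projection $P_0$ throughout. Two preliminary observations make this legitimate. First, Hypothesis~\ref{periodicorbitsassump} is exactly what is needed (via Lemma~\ref{lem_proj}) for $P_0$ to be a genuine orthogonal — hence bounded — projection in $L^2_a$, so that both $A_0$ (Definition~\ref{dfnA0}) and $K_0(\mu)$ in \eqref{K0mu} are well defined. Second, $K_0(\mu)\in\mathcal B_2$: the factor $g'(\psi^0)-g'(\psi^0)P_0$ is bounded, while $(-\Delta-\mu)^{-1}$ is Hilbert--Schmidt because it is unitarily equivalent, through the Fourier transform, to $\diag\{(|\bk|^2-\mu)^{-1}\}_{\bk\in\bbZ^2\setminus\{0\}}$ and the series $\sum_{\bk\in\bbZ^2\setminus\{0\}}(|\bk|^2-\mu)^{-2}$ converges. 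Thus the two-modified determinant $\mathcal D(0,\mu)=\det_{2,L^2}(I-K_0(\mu))$ exists.

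Next I would establish the equivalence of (i) and (ii). Starting from \eqref{newdfA0ag} and writing $-\Delta-g'(\psi^0)+g'(\psi^0)P_0=(-\Delta-\mu)-\bigl(g'(\psi^0)-g'(\psi^0)P_0\bigr)$, one factors out $(-\Delta-\mu)$ on the right to obtain
\[
A_0-\mu=\bigl(I-\bigl(g'(\psi^0)-g'(\psi^0)P_0\bigr)(-\Delta-\mu)^{-1}\bigr)(-\Delta-\mu)=(I-K_0(\mu))(-\Delta-\mu),
\]
valid for $\mu\in\mathbb C\setminus\sigma(-\Delta)$, exactly as in \eqref{alklequiv}. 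Since for such $\mu$ the operator $(-\Delta-\mu)\colon H_a^2\to L^2_a$ is a bijection, $A_0-\mu$ (with domain $H_a^2$) is boundedly invertible in $L^2_a$ if and only if $I-K_0(\mu)$ is boundedly invertible in $L^2_a$; equivalently, $\mu\in\sigma(A_0)$ if and only if $1\in\sigma(K_0(\mu))$. Intersecting with $\mathbb C\setminus\sigma(-\Delta)$ gives (i)$\iff$(ii).

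Finally, the equivalence of (ii) and (iii) is a general fact about two-modified Fredholm determinants: for $K_0(\mu)\in\mathcal B_2$ one has $\det_{2,L^2}(I-K_0(\mu))=0$ if and only if $I-K_0(\mu)$ fails to be injective, i.e. $1$ is an eigenvalue of the compact operator $K_0(\mu)$, i.e. $1\in\sigma(K_0(\mu))$; see \cite[Theorem 9.2(e)]{Si05} (or \cite[Sec.~IV.2]{GK69}). This completes the cycle of implications. I do not expect any substantial obstacle here: the argument is formally identical to that of Proposition~\ref{alambdaklmueq}, and the only point requiring care is the bookkeeping that justifies invoking Hypothesis~\ref{periodicorbitsassump} — through Lemma~\ref{lem_proj} — to guarantee that $P_0$, and hence $A_0$ and $K_0(\mu)$, are meaningful objects and that $K_0(\mu)$ is Hilbert--Schmidt so the determinant in (iii) is defined.
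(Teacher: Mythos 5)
Your proposal is correct and follows exactly the route the paper intends: the paper proves this proposition only by the remark ``Similarly to Proposition \ref{alambdaklmueq} one shows the following fact,'' and you have carried out precisely that analogue — the factorization $A_0-\mu=(I-K_0(\mu))(-\Delta-\mu)$ for (i)$\iff$(ii) and the standard $\det_2$ vanishing criterion for (ii)$\iff$(iii). No gaps.
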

\begin{remark}\label{mu0L0}
Because $K_{\lambda}(\cdot):\mathbb C \backslash \sigma(-\Delta) \to \mathcal B_{2}$ and $K_{0}(\cdot):\mathbb C \backslash \sigma(-\Delta) \to \mathcal B_{2}$ are holomorphic, by the general theory of two modified Fredholm determinants we conclude that the functions $\mathcal D(0,\cdot)$ and $\mathcal D(\lambda,\cdot)$ are holomorphic functions of $\mu \in \mathbb C \backslash \sigma(-\Delta)$. This is seen by applying Assertion IV.1.8 in \cite{GK69} and Lemma 9.1 in \cite{Si05}. 

Recall that $\lambda(\lambda-L^{0})^{-1} \to P_{0}$ strongly in $L^{2}_{a}$ as $\lambda \to 0^{+}$ by Lemma \ref{lem_proj}. Re-writing $K_{\lambda}(\mu)$ and $K_{0}(\mu)$ as $K_{\lambda}(\mu)=T_{\lambda}S(\mu)$ and $K_{0}(\mu)=T_{0}S(\mu)$, where $T_{\lambda}=g'(\psi^{0})-g'(\psi^{0})\lambda(\lambda-L^{0})^{-1}$ for $\lambda >0$ and $T_{0}=g'(\psi^{0})-g'(\psi^{0})P_{0}$, and $S(\mu)=(-\Delta-\mu)^{-1} \in \mathcal B_{2}$, we conclude that $T_{\lambda} \to T_{0}$ strongly in $L^{2}_{a}$ as $\lambda \to 0$. The following fact is a standard result in the theory of Hilbert-Schmidt operators, cf. \cite[Theorem 6.3]{GK69}.
\begin{lemma}\label{lemmaconv}
For the operators $K_{\lambda}(\mu)$ from \eqref{klmudef} and $K_{0}(\mu)$ from \eqref{K0mu} we have
$K_{\lambda}(\mu) \to K_{0}(\mu)$ in $\mathcal B_{2}$ as $\lambda \to 0^{+}$ uniformly for $\mu $ from any compact set in $\mathbb C \backslash \sigma(-\Delta)$. 
\end{lemma}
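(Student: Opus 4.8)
The plan is to read everything off the factorization recorded just above the statement, $K_\lambda(\mu)-K_0(\mu)=(T_\lambda-T_0)S(\mu)$, where $T_\lambda=g'(\psi^0)-g'(\psi^0)\lambda(\lambda-L^0)^{-1}$, $T_0=g'(\psi^0)-g'(\psi^0)P_0$, and $S(\mu)=(-\Delta-\mu)^{-1}\in\mathcal B_{2}$. The underlying principle is that right-multiplication by a fixed Hilbert--Schmidt operator upgrades strong operator convergence of a uniformly bounded family to convergence in $\mathcal B_{2}$, cf. \cite[Theorem 6.3]{GK69}; since we also want uniformity in $\mu$ on compacts, I would spell out the short direct argument using the Fourier basis, in which $S(\mu)$ is diagonal.

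First I would collect the two inputs. Applying \eqref{prop3} with real $\lambda>0$ gives $\|\lambda(\lambda-L^0)^{-1}\|_{\cB(L^{2})}\le1$, and since $\|P_0\|=1$ we get the uniform bound $C_0:=\sup_{\lambda>0}\|T_\lambda-T_0\|_{\cB(L^{2})}\le 2\|g'(\psi^0)\|_{L^{\infty}}<\infty$ (using $T_\lambda-T_0=g'(\psi^0)(P_0-\lambda(\lambda-L^0)^{-1})$). By Lemma \ref{lem_proj} together with boundedness of multiplication by $g'(\psi^0)$, $T_\lambda\to T_0$ strongly in $L^{2}_{a}$ as $\lambda\to0^+$; in particular, writing $\{e_\bk\}_{\bk\in\bbZ^2\setminus\{0\}}$ for the orthonormal Fourier basis of $L^{2}_{a}$, one has $\|(T_\lambda-T_0)e_\bk\|_{L^{2}}\to0$ for each fixed $\bk$ while $\|(T_\lambda-T_0)e_\bk\|_{L^{2}}\le C_0$ for all $\lambda>0$.

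Next, using $S(\mu)e_\bk=(|\bk|^2-\mu)^{-1}e_\bk$, I would compute the Hilbert--Schmidt norm on the Fourier basis,
\[
\|K_\lambda(\mu)-K_0(\mu)\|_{\mathcal B_{2}}^2=\sum_{\bk\in\bbZ^2\setminus\{0\}}\big||\bk|^2-\mu\big|^{-2}\,\big\|(T_\lambda-T_0)e_\bk\big\|_{L^{2}}^2 .
\]
Fix a compact $\mathcal K\subset\bbC\setminus\sigma(-\Delta)$ and set $h(\bk):=\sup_{\mu\in\mathcal K}\big||\bk|^2-\mu\big|^{-2}$. Since $\mathcal K$ is bounded and stays away from $\{|\bk|^2:\bk\in\bbZ^2\}$, one has $h(\bk)\le 4|\bk|^{-4}$ for all large $|\bk|$ and $h(\bk)<\infty$ for the remaining finitely many $\bk$, so $\sum_\bk h(\bk)<\infty$. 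Bounding $||\bk|^2-\mu|^{-2}\le h(\bk)$ termwise gives, uniformly for $\mu\in\mathcal K$,
\[
\|K_\lambda(\mu)-K_0(\mu)\|_{\mathcal B_{2}}^2\le\sum_{\bk\in\bbZ^2\setminus\{0\}}h(\bk)\,\big\|(T_\lambda-T_0)e_\bk\big\|_{L^{2}}^2 .
\]
The summands on the right are dominated by $C_0^2\,h(\bk)$, which is summable and independent of $\lambda$, and tend to $0$ pointwise in $\bk$ as $\lambda\to0^+$; the dominated convergence theorem on $\ell^1(\bbZ^2\setminus\{0\})$ then forces the right-hand side to $0$, which proves the lemma.

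I do not expect a genuine obstacle: the assertion is soft, and the only point calling for a little care is the interchange of the supremum over $\mu\in\mathcal K$ with the sum over $\bk$. Taking the supremum termwise before summing — legitimate precisely because $h$ is summable — is what makes the $\lambda\to0^+$ convergence uniform in $\mu$; without that observation one still gets convergence for each fixed $\mu$, but the uniform statement in the lemma would not be immediate.
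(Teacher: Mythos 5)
Your proof is correct. It rests on the same two inputs as the paper's argument --- the factorization $K_\lambda(\mu)-K_0(\mu)=(T_\lambda-T_0)S(\mu)$ with $T_\lambda\to T_0$ strongly (via Lemma \ref{lem_proj}), a uniform bound on $\|T_\lambda-T_0\|_{\cB(L^2)}$, and $S(\mu)\in\mathcal B_2$ --- but it differs in how the $\mathcal B_2$ convergence and the uniformity in $\mu$ are extracted. The paper invokes the abstract principle of \cite[Theorem 6.3]{GK69} (splitting $S(\mu)=K+L$ with $K$ finite rank and $L$ small in $\mathcal B_2$) and then handles the $\mu$-dependence by choosing a finite $\varepsilon$-net $\{\mu_k\}$ in the compact set so that $\|S(\mu)-S(\mu_k)\|_{\mathcal B_2}$ is small and running the argument at each $\mu_k$. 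You instead exploit the specific structure of $S(\mu)$: it is diagonal in the Fourier basis, so $\|K_\lambda(\mu)-K_0(\mu)\|_{\mathcal B_2}^2$ is an explicit series, the supremum over $\mu$ in the compact set can be taken termwise against a summable majorant $h(\bk)$, and dominated convergence in $\ell^1(\bbZ^2\setminus\{0\})$ finishes the proof. Your route is more self-contained and elementary (no appeal to the finite-rank decomposition, and the uniformity in $\mu$ comes for free from the termwise supremum rather than from a net), at the cost of being tied to the diagonal form of $(-\Delta-\mu)^{-1}$; the paper's route is more general and would survive replacing $S(\mu)$ by any $\mathcal B_2$-valued family continuous in $\mu$. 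Both are complete; the only step in yours that genuinely needed care --- interchanging $\sup_{\mu\in\mathcal K}$ with the sum over $\bk$ --- you identified and justified correctly.
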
 
\begin{proof}
For each fixed $\mu$ assertions $T_{\lambda} \to T_{0}$ strongly as $\lambda \to 0^{+}$ and $S(\mu) \in \mathcal B_{2}$ yield $T_{\lambda}S(\mu) \to T_{0}S(\mu)$ in $\mathcal B_{2}$ by e.g., Theorem 6.3 in \cite{GK69}. The proof given in \cite{GK69} is by writing $S(\mu)=K+L$ where $K$ is of finite rank and $L \in \mathcal B_{2}$ with a small $\mathcal B_{2}$ norm, and then using that $T_{\lambda}u_{j} \to T_{0}u_{j}$ as $\lambda \to 0^{+}$ for a finite basis $\{u_{j}\}$ of the range of $K$. This proof can be easily adapted for $S=S(\mu)$ depending on $\mu$ by taking first a finite $\varepsilon$-dense net $\{\mu_{k}\}$ in the compact set so that $\|S(\mu)-S(\mu_{k})\|_{\mathcal B_{2}}$ are sufficiently small, and next applying the proof of \cite[Theorem III.6.3]{GK69} for each $\mu_{k}$.
\end{proof}
By \cite[Theorem IV.2.1]{GK69} the map $K \to \det_{2}(I-K)$ is continuous on $\mathcal B_{2}$. By Lemma \ref{lemmaconv} we then conclude that $\mathcal D(\lambda, \mu) \to \mathcal D(0,\mu)$ as $\lambda \to 0^{+}$ uniformly for $\mu$ on compact subsets of $\mathbb C \backslash \sigma(-\Delta)$; here $\mathcal D(0,\mu)$ is defined in (iii) of Proposition \ref{K0muA0}.
In what follows, we will apply the argument principle to the family of holomorphic functions $\mathcal D({\lambda},\cdot)$, $\lambda \geq 0$, in parameter $\mu$. The following general formula for the logarithmic derivative can be found in \cite[sec 1.7]{Y92}, see there formula (18),
\begin{equation}\label{Yaffor}
\frac{d}{d\mu}\log \mathcal D(\lambda,\mu)=-\Tr\bigg((I-K_{\lambda}(\mu))^{-1}K_{\lambda}(\mu)\frac{dK_{\lambda}(\mu)}{d\mu}\bigg).
\end{equation}
We recall that $K_{\lambda}(\mu)$ and $\frac{dK_{\lambda}(\mu)}{d\mu}$ are in $\mathcal B_{2}$ and their product is in $\mathcal B_{1}$, see e.g. \cite[Prop I.6.3]{Y92}.
This formula holds for $\lambda \geq 0$ and $\mu \in \mathbb C \backslash \sigma(-\Delta)$. Since $\Tr$ is a continuous functional on $\mathcal B_{1}$, it follows from Lemma \ref{lemmaconv} that
\begin{equation}\label{converD}
\frac{d}{d\mu} \log \mathcal D(\lambda,\mu) \to \frac{d}{d\mu}\log \mathcal D(0,\mu) \text{ as } \lambda \to 0^{+}
\end{equation}
on compact subsets of $\mathbb C \backslash \sigma(-\Delta)$. We recall that by the argument principle, the number of zeros of a holomorphic function enclosed by the contour is computed via the integral over the contour of its logarithmic derivative.
\end{remark}
\begin{lemma}\label{newlem}
Let $\mathcal R$ be a bounded closed rectangle in the $\mu$-plane whose boundary belongs to the resolvent set of the operator $A_{0}$. There exists $\lambda_{0}>0$ such that for all $\lambda \in (0,\lambda_{0})$ the number of eigenvalues of $A_{0}$ in $\mathcal R$ is equal to the number of eigenvalues of $A_{\lambda}$ in $\mathcal R$.
\end{lemma}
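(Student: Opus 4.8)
The plan is to identify the two eigenvalue counts with winding numbers of the two-modified determinants $\mathcal D(0,\cdot)$ and $\mathcal D(\lambda,\cdot)$ along $\partial\mathcal R$, and then to let $\lambda\to0^+$ using the convergence of logarithmic derivatives recorded in \eqref{converD}. Observe first that $A_0$, and each $A_\lambda$, equals $-\Delta$ plus a bounded operator, hence has compact resolvent and discrete spectrum, so only finitely many eigenvalues of $A_0$ (resp.\ of $A_\lambda$) lie in the bounded rectangle $\mathcal R$; both counts in the statement are therefore finite nonnegative integers.

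First I would adjust the contour. Since $\partial\mathcal R\subset\rho(A_0)$ and $\rho(A_0)$ is open, $\operatorname{dist}(\partial\mathcal R,\sigma(A_0))>0$; because $\sigma(-\Delta)$ is a discrete subset of $[1,\infty)$, a small translation of the four sides of the rectangle produces a rectangle --- still denoted $\mathcal R$ --- with $\partial\mathcal R\cap\bigl(\sigma(A_0)\cup\sigma(-\Delta)\bigr)=\emptyset$ and enclosing exactly the same eigenvalues of $A_0$. In the situation where this lemma is applied, namely for rectangles enclosing the negative eigenvalues of $A_0$ (cf.\ Proposition \ref{A0Alambd}), one moreover has $\overline{\mathcal R}\cap\sigma(-\Delta)=\emptyset$; I will run the argument under this assumption and note at the end the bookkeeping modification otherwise. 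Then $\partial\mathcal R$ is a compact subset of $\bbC\setminus\sigma(-\Delta)$ on which, by Proposition \ref{K0muA0}, $I-K_0(\mu)$ is boundedly invertible, and $\mathcal D(0,\cdot)$, $\mathcal D(\lambda,\cdot)$ are holomorphic (without poles) in a neighbourhood of $\mathcal R$.

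Next, by Lemma \ref{lemmaconv}, $K_\lambda(\mu)\to K_0(\mu)$ in $\mathcal B_2$ uniformly for $\mu\in\partial\mathcal R$; together with $\sup_{\mu\in\partial\mathcal R}\|(I-K_0(\mu))^{-1}\|<\infty$ (continuity of $\mu\mapsto(I-K_0(\mu))^{-1}$ on the compact $\partial\mathcal R$), a Neumann-series estimate produces $\lambda_0>0$ such that $I-K_\lambda(\mu)$ is invertible for all $\mu\in\partial\mathcal R$ and all $\lambda\in(0,\lambda_0)$. By \eqref{alklequiv} this says $\partial\mathcal R\subset\rho(A_\lambda)$ and $\mathcal D(\lambda,\cdot)\ne0$ on $\partial\mathcal R$, so the logarithmic derivative in \eqref{Yaffor} is well defined and continuous on $\partial\mathcal R$ for every $\lambda\in[0,\lambda_0)$. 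Combining Propositions \ref{alambdaklmueq} and \ref{K0muA0} with the standard fact that the order of the zero of $\mathcal D(\lambda,\cdot)=\det_2(I-K_\lambda(\cdot))$ at a point $\mu\notin\sigma(-\Delta)$ equals the algebraic multiplicity of $\mu$ as an eigenvalue of $A_\lambda$ (see e.g.\ \cite[Ch.~9]{Si05}, \cite[Sec.~IV]{GK69}, \cite[Sec.~I.7]{Y92}), the argument principle gives
\[ N_\lambda=\frac{1}{2\pi i}\oint_{\partial\mathcal R}\frac{d}{d\mu}\log\mathcal D(\lambda,\mu)\,d\mu,\qquad \lambda\in[0,\lambda_0), \]
where $N_\lambda$ denotes the number of eigenvalues of $A_\lambda$ in $\mathcal R$ counted with algebraic multiplicity. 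By \eqref{converD} the integrand converges uniformly on the compact set $\partial\mathcal R$, so $N_\lambda\to N_0$ as $\lambda\to0^+$; since $N_\lambda,N_0\in\bbZ_{\ge0}$, after shrinking $\lambda_0$ we obtain $N_\lambda=N_0$ for all $\lambda\in(0,\lambda_0)$, which is the assertion. When $\sigma(-\Delta)$ meets the interior of $\mathcal R$, each such point $\mu_0$ contributes to the contour integral the integer equal to the algebraic multiplicity of $\mu_0$ for $A_\lambda$ minus its multiplicity for $-\Delta$; the latter is independent of $\lambda$, so it cancels in $N_\lambda-N_0$ and the argument is unchanged.

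The main obstacle --- and the point where uniformity in Lemma \ref{lemmaconv} is indispensable --- is the third paragraph: one needs $\partial\mathcal R$ to avoid $\sigma(A_\lambda)$ for \emph{all} small $\lambda$ at once and the logarithmic derivatives of $\mathcal D(\lambda,\cdot)$ to converge uniformly along $\partial\mathcal R$, both of which rest on $K_\lambda(\mu)\to K_0(\mu)$ uniformly in $\mu$ on the contour (not merely pointwise). Everything else --- the identification of the counts with the contour integrals and the passage to the limit --- is the Birman--Schwinger/argument-principle formalism together with the integrality of $N_\lambda$ and $N_0$.
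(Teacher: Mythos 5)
Your proof is correct and follows essentially the same route as the paper: both identify the eigenvalue counts with winding numbers of $\mathcal D(0,\cdot)$ and $\mathcal D(\lambda,\cdot)$ along $\partial\mathcal R$ and pass to the limit $\lambda\to0^+$ using the uniform convergence of the logarithmic derivatives recorded in \eqref{converD}, together with the integrality of the counts. The only difference is that you make explicit several points the paper leaves implicit --- finiteness of the counts, the possibility that $\partial\mathcal R$ or the interior of $\mathcal R$ meets $\sigma(-\Delta)$, the Neumann-series argument showing $\partial\mathcal R\subset\rho(A_\lambda)$ for all small $\lambda$ simultaneously, and the identification of the order of a zero of $\mathcal D(\lambda,\cdot)$ with the algebraic multiplicity of the corresponding eigenvalue of $A_\lambda$.
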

\begin{proof}
By Proposition \ref{alambdaklmueq} (iii) and Proposition \ref{K0muA0} (iii) for all $\lambda \geq 0$ the eigenvalues of $A_{\lambda}$ are exactly the zeros of the function $\mathcal D(\lambda,\cdot)$. If $\lambda_{0}$ is small enough and $\lambda \in (0,\lambda_{0})$ then $\mathcal D(\lambda,\mu) \neq 0$ for $\mu \in \partial \mathcal R$, the boundary of $\mathcal R$, because $\mathcal D(0,\mu) \neq 0$ for $\mu \in \partial \mathcal R$ as $\partial \mathcal R$ does not intersect $\sigma(A_{0})$. Thus the logarithmic derivative of $\mathcal D(\lambda,\mu)$ is well defined for $\mu \in \partial \mathcal R$. Using \eqref{converD}, we can make the logarithmic derivative of $\mathcal D(\lambda,\mu)$ arbitrarily close to that of $\mathcal D(0,\mu)$ for $\mu \in \partial \mathcal R$ by further reducing $\lambda_{0}$. By the argument principle the number of zeros of $\mathcal D(0,\mu)$ in $\mathcal R$ is equal to the number of zeros of $\mathcal D(\lambda,\mu)$ in $\mathcal R$ provided $\lambda \in (0,\lambda_{0})$ and $\lambda_{0}$ is small enough.
\end{proof}
We are ready to prove Proposition \ref{A0Alambd}.
\begin{proof}
Since $0$ is not an eigenvalue of $A_{0}$ by assumption, there is a small $\varepsilon >0$ such that the rectangle $\mathcal R_{0}:=[-\varepsilon,\varepsilon]\times[-\varepsilon,\varepsilon]$ does not contain eigenvalues of $A_{0}$. Reducing $\varepsilon$ further, if needed, find a long thin rectangle $\mathcal R=[-a,\varepsilon]\times[-\varepsilon,\varepsilon], a>0$ that contains all the negative eigenvalues of $A_{0}$ and does not contain any other points in $\sigma(A_{0})$. By Lemma \ref{newlem}, if $\lambda_{0}>0$ is small enough and $\lambda \in (0,\lambda_{0})$ then $A_{\lambda}$ has no spectrum in $\mathcal R_{0}$ and the number of eigenvalues of $A_{\lambda}$ in $\mathcal R$ is equal to the number of eigenvalues of $A_{0}$ in $\mathcal R$. Since the number of negative eigenvalues of $A_{0}$ is odd by assumption, we conclude that $A_{\lambda}$ has the same odd number of negative eigenvalues provided $\lambda \in (0,\lambda_{0})$ and $\lambda_{0}>0$ is small enough. This finishes the proof of Proposition \ref{A0Alambd} and completes the proof of Lin's Theorem \ref{LinThm}.
\end{proof}

\section{An application of Lin's instability theorem}
In this section, we study an example of a steady state for which the operator $A_{0}$ is invertible and has an odd number of negative eigenvalues, and is thus unstable by Lin's Theorem \ref{LinThm}. A similar example was used by Lin, see \cite[Example 2.7]{ZL03}, to study instability of steady states on the channel, and we adapted this to the case of the torus. 
\begin{example}
We work with steady states of the form, $\bu^{0}=(\cos my,0)$, where $m \geq 2$ is an integer. Observe that $\psi^{0}(x,y)=\sin my / m$ and $\omega^{0}(x,y)=m \sin my$, from which we obtain that $\omega^{0}(x,y)=g(\psi^{0}(x,y))= m^{2}\psi^{0}(x,y)$, i.e $g'=m^{2}$. 
Let us fix a positive integer $k > 0$ to be determined later. Let $j$ be an integer in $[1,\lfloor m /2\rfloor]$, where $\lfloor m/2 \rfloor$ denotes the greatest integer less than or equal to $m/2$. Denote by $X_{k,j}$ the subspace of  $L^{2}_{a}(\bbT^{2};\bbC)$ consisting of functions of the form $\phi(y)e^{ikx}$ where $\phi(y) \in  Y := \mbox{ span }\{\sin(ny)\}$, %
where $n=j+mp$, $p \in \mathbb Z$. In other words, we have that $X_{k,j}=Y \otimes E_{ikx}$, where we have denoted the one dimensional subspace by $E_{ikx}:= \mbox{ span }\{e^{ikx}\}$. Noting that $\cos (my) \sin(ny) \in Y$ by the trigonometric formula $\cos (my) \sin(ny)= \frac{1}{2}(\sin(n+m)y+\sin(n-m)y)$, it is easy to verify that the space $X_{k,j}$ is invariant under the operators $L^{0}, L_{\rm vor}, A_{\lambda}$, which shall henceforth be restricted to the subspace $X_{k,j}$.

We now compute the kernel of the operator $L^{0}$. Let $\psi(x,y) \in \mbox{ dom } L^{0} \cap X_{k,j}$, i.e., $\psi$ is of the form $\psi(x,y)=\phi(y)e^{ikx}$, where $\phi(y) \in Y \cap \mbox{ dom } L^{0}$. Then
\[L^{0}\psi(x,y) = -\bu^{0}\cdot \nabla \psi(x,y) = -\cos my \partial_{x} \psi(x,y)= -\cos my (ik) \phi(y) e^{ikx}.\] Note that $-\cos my (ik) \phi(y) e^{ikx}=0$ if and only if $\phi(y)=0$. In particular, we see that the kernel of the operator $L^{0}$ acting on $X_{k,j}$ is simply $\{0\}$. 
Thus the orthogonal projection $P_{0}$ onto the kernel of $L^{0}$ restricted to the subspace $X_{k,j}$, is the zero operator, i.e $P_{0}|_{X_{k,j}} = 0$. %

We now compute the operator $A_{0}$, see \eqref{defA0},
\begin{align*}
&A_0 \psi (x,y)=[-\Delta-g'(\psi^0(x,y))+g'(\psi^0(x,y))P_0]\psi(x,y) \\
&= [-\phi''(y) +k^{2}\phi(y) -m^{2}\phi(y)]e^{ikx}=e^{ikx}[A_{0,y}\phi(y)]
\end{align*}
(since $P_{0}=0$), where $A_{0,y}$ acts on the space $Y$, and is given by \[A_{0,y}\phi(y)=-\phi''(y) +k^{2}\phi(y) -m^{2}\phi(y).\] We can write the operator $A_{0}$ as $A_{0,y}\otimes I$ where $A_{0,y}$ acts on the space $Y$ times the identity operator acting on $e^{ikx}$.
The spectrum of $A_{0}$ acting on the space $X_{k,j}$ thus corresponds to the spectrum of the operator $A_{0,y}$ on the space $Y$, that is, $\sigma(A_{0};X_{k,j})=\sigma(A_{0,y};Y)$. If we can prove that $A_{0}$ acting on the space $X_{k,j}$ has an odd number of negative eigenvalues and no kernel, $L_{\rm vor}$ has an unstable eigenvalue by Theorem \ref{LinThm}. Since $\sigma(A_{0};X_{k,j})=\sigma(A_{0,y};Y)$, this amounts to showing that $A_{0,y}$ acting on the space $Y$ has an odd number of negative eigenvalues and no kernel.

We now choose $k$ such that the operator $A_{0,y}$ acting on $Y$ has exactly one negative eigenvalue and no zero eigenvalue. Since $-(\sin(ny))''=n^{2}\sin(ny)$, we have that the eigenvalues contributed by the second derivative operator acting on $Y$ are $\{n^{2}:n=j+mp, p \in \bbZ\}$ (note that $\{\sin(ny)\}$ is a basis for $Y$). Our choice is such that the two smallest eigenvalues in magnitude, contributed by the second derivative operator are when $n=j$ (i.e., when $p=0$) and $n=j-m$ (when $p=-1$). Since $A_{0,y}$ is the second derivative operator added to a multiplication operator, the spectrum of $A_{0,y}$ is given by $\{n^{2}+k^{2}-m^{2}:n=j+mp, p \in \bbZ\}$.

If we choose $k$ such that $m^2-j^2 > k^2 >m^2 -(j-m)^2 $ holds, then the operator $A_{0,y}$ acting on $Y$ has \textit{exactly} one negative eigenvalue (counting multiplicity) and no zero eigenvalue. For a concrete example, choose $m=4$, $j=1$ and $k=3$. Another example is given by the choices, $m=7$, $j=2$ and $k=6$. This gives exactly one negative eigenvalue and no zero eigenvalue for $A_{0,y}$, hence for $A_{0}$, and therefore proves the existence of a positive eigenvalue of $L_{\rm vor}$.

\end{example}



\end{document}